\newtheorem{theorem}{Theorem}
\newtheorem{lemma}{Lemma}
\newtheorem*{assumption}{Assumption}
\newtheorem{corollary}{Corollary}
\long\def\ignore#1{}
\newcommand{\be}{\begin{equation}}
\newcommand{\ee}{\end{equation}}
\newcommand{\beqn}{\begin{eqnarray}}
\newcommand{\eeqn}{\end{eqnarray}}
\newcommand{\bfm}[1]{\mbox{\boldmath $#1$}}
\newcommand{\bbeta}{\bfm{\beta}}
\newcommand{\btheta}{\bfm{\theta}}
\newcommand{\bbetas}{\widehat{\bbeta}_{Slope}}
\newcommand{\bthetas}{\widehat{\btheta}_{Slope}}
\newcommand{\bp}{{\bf p}}
\newcommand{\bY}{{\bf Y}}
\newcommand{\bx}{{\bf x}}
\newcommand{\bX}{{\bf X}}
\newcommand{\bu}{{\bf u}}
\newcommand{\bw}{{\bf w}}
\newcommand{\mE}{\mathbb{E}}
\newcommand{\cu}{{\cal U}}
\newcommand{\cl}{{\cal L}}
\newcommand{\cb}{{\cal B}}
\newcommand{\cE}{{\cal E}}
\newcommand{\cC}{{\cal C}}
\newcommand{\cS}{{\cal S}}
\newcommand{\cH}{{\cal H}}
\newcommand{\cX}{{\cal X}}
\newcommand{\M}{\mathfrak{M}}
\begin{document}

\title{\bf High-dimensional classification by sparse logistic regression}

\author{{\bf Felix Abramovich}\\
Department of Statistics\\
 and Operations Research \\
Tel Aviv University \\
Israel \\
{\it felix@post.tau.ac.il}
\and
{\bf Vadim Grinshtein} \\
Department of Mathematics\\ 
and Computer Science\\
The Open University of Israel\\ 
Israel\\
{\it vadimg@openu.ac.il}
}

\date{}

\maketitle

\begin{abstract}
We consider high-dimensional binary classification by sparse logistic regression. We propose a model/feature selection procedure based on penalized maximum likelihood with a complexity penalty on the model size and derive 
the non-asymptotic bounds for its misclassification excess risk. To assess its tightness we establish  the corresponding minimax lower bounds.
The bounds can be reduced under the additional low-noise condition. The proposed
complexity penalty is remarkably related to the VC-dimension of a set of sparse linear classifiers.
Implementation of any complexity penalty-based criterion, however, requires a combinatorial search over all possible models. To find a model selection procedure computationally feasible
for high-dimensional data, we extend the Slope estimator for logistic regression  
and show that under an additional weighted restricted eigenvalue condition it is
rate-optimal in the minimax sense.
  
\end{abstract}

\noindent
{\em Keywords}:
Complexity penalty; feature selection; high-dimensionality; misclassification excess risk; sparsity; VC-dimension.

\bigskip

\section{Introduction} \label{sec:intr}
Classification is one of the most important setups in statistical learning and has been
studied in various contexts.  Theoretical foundations of classification
are presented in the books of Devroye, Gy\"orfi and Lugosi (1996) and Vapnik (2000), while the
surveys of the state-of-the-art can be found in Boucheron, Bousquet and Lugosi (2005) and
Giraud (2015, Section 9).

Consider a general (binary) classification with a (high-dimensional) vector of features $\bx \in \mathbb{R}^d$ and the outcome class label $Y|\bx \sim Bin(1,p(\bx))$.
The accuracy of a classifier $\eta$
is defined by a misclassification error $R(\eta)=P(Y \neq \eta(\bx))$. It is well-known that $R(\eta)$
is minimized by the Bayes classifier $\eta^*(\bx)=I\{p(\bx) \geq 1/2\}$.
However, the probability function $p(\bx)$ is unknown and the resulting classifier $\widehat{\eta}(\bx)$ should be designed from the data $D$: a random sample of $n$ independent observations $(\bx_1,Y_1),\ldots, (\bx_n,Y_n)$. 
The design points $\bx_i$ may be considered as fixed or random.  
The corresponding (conditional) misclassification error of $\widehat{\eta}$ is $R(\widehat{\eta})=P(Y \neq \widehat{\eta}(\bx)|D)$ and the goodness of $\widehat{\eta}$ w.r.t. $\eta^*$ is measured by the misclassification
excess risk $\cE(\widehat{\eta},\eta^*)=ER(\widehat{\eta})-R(\eta^*)$.

A common general (nonparametric) approach for finding a classifier $\widehat{\eta}$ from the data is empirical risk minimization (ERM), where minimization of 
a true misclassification error $R(\eta)$  is replaced by minimization of the corresponding empirical risk
$\widehat{R}_n(\eta)=\frac{1}{n}\sum_{i=1}^n I\{Y_i \neq \eta(\bx_i)\}$ over a given class of classifiers. Misclassification excess risk of ERM classifiers  has been intensively studied in the literature (see, e.g., Boucheron, Bousquet and Lugosi, 2005 and
Giraud, 2015, Section 9 for surveys  and references therein).
However, ERM can
be hardly used directly in practice due to its computational cost and is typically relaxed by some related convex minimization surrogate (e.g., SVM). 

Another possibility to obtain $\widehat{\eta}$ is to estimate $p(\bx)$ from the data by some $\widehat{p}(\bx)$
and use a plug-in classifier of the form  $\widehat{\eta}(\bx)=I\{\widehat{p}(\bx) \geq 1/2\}$.
A standard approach is to assume some (parametric or nonparametric) model for $p(\bx)$. In this paper we consider one of the most commonly used models -- logistic regression, where it is assumed that
$p(\bx)=\frac{\exp(\bbeta^t \bx)}{1+\exp(\bbeta^t \bx)}$ and $\bbeta \in \mathbb{R}^d$ is a vector of unknown regression coefficients. The corresponding Bayes classifier 
is a linear classifier $\eta^*(\bx)=I\{p(\bx) \geq 1/2\}=I\{\bbeta^t \bx \geq 0\}$.
One then estimates $\bbeta$ from the data by the maximum likelihood estimator (MLE) $\widehat{\bbeta}$,
plugs-in $\widehat{\bbeta}$ (or, equivalently, $\widehat{p}(\bx)$) and
the resulting (linear) classifier is $\widehat{\eta}(\bx)=I\{\widehat{p}(\bx) \geq 1/2\}=I\{\widehat{\bbeta}^t \bx \geq 0\}$. Unlike ERM, the MLE  $\widehat{\bbeta}$ though not available in the closed form, can be nevertheless obtained numerically by the fast iteratively reweighted least squares
algorithm (McCullagh and Nelder, 1989, Section 2.5). Nonparamertric
plug-in classifiers were considered in Yang (1999), Koltchinskii and Beznosova (2005), Audibert and Tsybakov (2007).

In the era of ``Big Data'', however, the number of features $d$ describing the objects for classification might be very large and
even larger than the sample size $n$ (large $d$ small $n$ setups) that raises a severe
``curse of dimensionality'' problem. 
Reducing the dimensionality of a feature space by selecting a sparse subset of ``significant'' features becomes essential.
Thus, Bickel and Levina (2004), Fan and Fan (2008) showed that even in simple cases, high-dimensional 
classification without feature selection 
might be as bad as just pure guessing.

Nevertheless, unlike model selection in high-dimensional Gaussian regression that has been intensively studied in 2000s
(see Birg\'e and Massart, 2001, 2007; Abramovich and Grinshtein, 2010; Rigollet and Tsybakov, 2011; Verzelen, 2012 among many others), there are much less theoretical results on model/feature selection in classification. 
Devroy, Gy\"orfi and Lugosi (1996, Chapter 18) and Vapnik (2000, Chapter 4) considered selection from a sequence of classifiers within
a sequence of classes  by penalized ERM with the structural penalty depending
on the Vapnik-Chervonenkis (VC) dimension of a class. They established the oracle inequalities and the upper bounds for
the misclassification excess risk of the selected classifier but did not provide the lower
bound to assess its optimality. See also  Boucheron, Bousquet and Lugosi (2005, Section 8) 
for related penalized ERM approaches and references therein. Recall, however, that a
computational cost (even for a {\em given} model) is a serious drawback of any ERM-based procedure.

The main goal of the paper is to fill the gap. In particular, we investigate feature selection in sparse logistic regression classification. Although logistic regression is widely used in various classification problems, its rigorous theoretical ground has not
been yet properly established.
Model selection in a general framework of generalized linear models (GLM) and in logistic regression in particular was
studied in Abramovich and Grinshtein (2016). They proposed model selection procedure based on penalized maximum likelihood
with a complexity penalty on the model size and investigated the goodness-of-fit of the
resulting estimator in terms of the Kullback-Leibler risk. They derived the nonasymptotic bounds for this risk and showed that the resulting estimator is asymptotically minimax and adaptive to the unknown sparsity. In this paper we utilize their approach for classification and consider the corresponding plug-in classifier.
In particular, we show that the considered complexity penalty is
remarkably related to the VC-dimension of a set of sparse linear classifiers. 
We establish the non-asymptotic upper bound for misclassification excess risk of the resulting classifier and construct explicitly the design for which it is sharp in the minimax sense. We also
show that the excess risk bounds can be improved under the additional low-noise assumption.

Any model selection criterion based on a complexity penalty requires, however, a combinatorial search over all possible models that makes its usefulness problematic for high-dimensional data. A common remedy is to replace the original complexity penalty
by a related convex surrogate. The probably most well-known techniques is Lasso.
However, it can achieve only the sub-optimal rates under some
extra conditions on the design matrix $X$ (van de Geer, 2008). 
Recently, for Gaussian linear regression Bogdan {\em et al.} (2015) proposed a Slope estimator.
Bellec, Lecu\'e and Tsybakov (2018) showed that under certain additional conditions on $X$, Slope is rate-optimal 
for linear regression. We adapt it to the
logistic regression (and, in fact, to a general GLM) setup and extend the results of Bellec, Lecu\'e and Tsybakov (2018).

The rest of the paper is organized as follows. In Section \ref{sec:notation}
we present the model (feature) selection procedure for sparse logistic regression with fixed design
based on a general procedure of Abramovich and Grinshtein (2016) and provide the upper bounds for the resulting estimator in terms of Kullback-Leibler risk.
In Section \ref{sec:bounds} we apply it for classification to establish the non-asymptotic upper bound for its misclassification excess risk and derive the corresponding minimax
lower bounds. The improvement of the obtained risk bounds under the 
additional low-noise assumption is given in Section \ref{sec:lownoise}. In Section \ref{sec:slope} we consider the
logistic Slope classifier as a convex surrogate for the proposed feature selection procedure and show that its
misclassification excess risk is still rate-optimal under an extra weighted restricted eigenvalue condition on the design matrix $X$. 
The random design case is considered in Section \ref{sec:random} . Section \ref{sec:example} provides a short real-data example. All the proofs are given in the Appendix.

\section{Notation and preliminaries} \label{sec:notation}
Consider a sparse logistic regression model
\be \label{eq:model}
Y_i \sim Bin(1,p_i),\;\;\;\;\; \ln \frac{p_i}{1-p_i}=\bbeta^t \bx_i
\ee
with deterministic design points $\bx_i \in \mathbb{R}^d\;\;\;i=1,\ldots,n$,
where we assume that the unknown vector of regression coefficients $\bbeta \in \mathbb{R}^d$ is sparse. 

Let $d_0=||\bbeta||_0$ be the
size of true (unknown) model, where the $l_0$ (quasi)-norm of regression
coefficients $||\bbeta||_0$  is the number of nonzero entries.
Let $X \in \mathbb{R}^{n \times d}$ be the design matrix of rows $\bx_i$, $r=rank(X)$ and assume that
any $r$ columns of $X$ are linearly independent.

For the model (\ref{eq:model}) the log-likelihood function is
$$
\ell(\bbeta)=\sum_{i=1}^n \left\{\bbeta^t \bx_i~ Y_i-\ln(1+\exp(\bbeta^t \bx_i)\right\}
$$

Let $\M$ be the set of all $2^d$ possible models $M \subseteq
\{1,\ldots,d\}$. 
For a given model $M$, define $\cb_M =\{\bbeta \in \mathbb{R}^d: \beta_j=0\;{\rm if}\; j \not\in M\}$. The MLE $\widehat{\bbeta}_M$ of $\bbeta$ 
is then
\be \label{eq:mle}
\widehat{\bbeta}_M=\arg \max_{\widetilde{\bbeta} \in \cb_M}
\sum_{i=1}^n\left \{\widetilde{\bbeta}^t \bx_i Y_i -  \ln\left(1+\exp( \widetilde{\bbeta}^t \bx_i)\right)\right\}
\ee
The $\widehat{\bbeta}_M$ in (\ref{eq:mle})
is not available in the closed
form but can be obtained numerically by the iteratively reweighted
least squares algorithm (see McCullagh and Nelder, 1989, Section 2.5). The corresponding
MLE for probabilities $p_i$ are $\widehat{p}_{Mi}=\frac{\exp(\widehat{\bbeta}_M^t \bx_i)}
{1+\exp(\widehat{\bbeta}_M^t \bx_i)}$.

Select the model $\widehat{M}$ by the penalized maximum likelihood model selection criterion of the form 
\be \label{eq:binpen}
\widehat{M}=
\arg \min_{M \in \M} 
\left\{\sum_{i=1}^n \left( \ln\left(1+\exp(\widehat{\bbeta}_M^t \bx_i)\right)-
\widehat{\bbeta}_M^t \bx_i Y_i \right)  + Pen(|M|) \right\},
\ee
where $Pen(|M|)$ is a complexity penalty on the model size $|M|$. In fact, we can
restrict $\M$ in (\ref{eq:binpen}) to models with sizes at most $r$ since 
for any $\bbeta$ with $||\bbeta||_0 > r$, there necessarily exists another 
$\bbeta'$ with $||\bbeta'||_0 \leq r$ such that $X\bbeta=X\bbeta'$.

Within general GLM framework,
Abramovich and Grinshtein (2016) investigated  the goodness-of-fit of the resulting 
estimator  $\widehat{\bp}_{\widehat M}$. They considered the Kullback-Leibler divergence  $KL(\bp,\widehat{\bp}_{\widehat M})$ between the data distribution with the true probabilities $\bp=(p_1,\ldots,p_n)$ and its empirical distribution generated by $\widehat{\bp}_{\widehat M}$ given by
\be \label{eq:klbin}
KL({\bp},\widehat{\bp}_{\widehat M})=\frac{1}{n}\sum_{i=1}^n \left\{p_i \ln \left(\frac{p_i}{\widehat{p}_{{\widehat M}i}}\right)+
(1-p_i) \ln \left(\frac{1-p_i}{1-\widehat{p}_{{\widehat M}i}}\right)\right\},
\ee
and measured the accuracy of  $\widehat{\bp}_{\widehat M}$ by the corresponding
Kullback-Leibler risk $EKL(\bp,\widehat{\bp}_{\widehat M})$ (in fact, the Kullback-Leibler divergence $KL(\cdot,\cdot)$ in Abramovich and Grinshtein, 2016 was defined as $n$ times $KL(\cdot,\cdot)$ in this paper).

\begin{assumption}[\bf A] \label{as:A}
Assume that there exists $0 < \delta < 1/2$ such that $\delta < p_i < 1-\delta$  
or, equivalently, there exists $C_0>0$ such that $|\bbeta^t \bx_i| < C_0$ in (\ref{eq:model}) for all $i=1,\ldots,n$.
\end{assumption}
Assumption (A) prevents the variances $Var(Y_i)=p_i(1-p_i)$ to be
infinitely close to zero. 

Consider a set of models of size at most $d_0$, where $1 \leq d_0 \leq r$.  Obviously, $|M| \leq d_0$ iff  $||\bbeta||_0 \leq d_0$.
Abramovich and Grinshtein (2016) showed that for the complexity penalty 
\be \label{eq:penalty}
Pen(|M|)=c~ |M|\ln \frac{de}{|M|},\; |M|=1,\ldots,r-1\;\;\;{\rm and}\;\;\;Pen(r)=c \cdot r
\ee
in (\ref{eq:binpen}), where
$c >\frac{4}{\delta(1-\delta)}$, under Assumption (A), the upper bound of the
Kullnack-Leibler risk is given by
\be \label{eq:klupper}
\sup_{\bbeta: ||\bbeta||_0 \leq d_0} EKL({\bp},\widehat{\bp}_{\widehat M}) \leq C \frac{1}{\delta (1-\delta)}~ \frac{\min\left(d_0
\ln \frac{de}{d_0}, r\right)}{n}
\ee
for some $C>0$. They also derived the corresponding minimax lower bounds for the Kullback-Leibler risk and showed that for weakly-collinear design, the upper bound in (\ref{eq:klupper}) is of the optimal order (in the minimax sense).

The above results on the Kullback-Leibler risk can be  extended to model selection under additional structural constraints on the set of admissible models $\M$ (see Secition 4.1 of Abramovich and Grinshtein, 2016).

In what follows we utilize (\ref{eq:klupper}) to derive the upper bounds for the
misclassification excess risk of the corresponding plug-in classifier 
$\widehat{\eta}_{\widehat M}(\bx)=I\{\widehat{\bbeta}_{\widehat M}^t \bx \geq 0\}$.
 
To gain more insight into the complexity penalty (\ref{eq:penalty}) within
classification framework we present the following lemma on the Vapnik-Chervonenskis (VC) dimension of the set of all $d_0$-sparse linear classifiers~:
\begin{lemma} \label{lem:vc} Let 
$\cC(d_0)=\{\eta(\bx)=I\{\bbeta^t \bx \geq 0\}:
\bbeta \in \mathbb{R}^d,\;||\bbeta||_0 \leq d_0\}$ be 
the set of all $d_0$-sparse linear classifiers  and $V(\cC(d_0))$ its VC-dimension. Then,
\be \label{eq:vc}
d_0 \log_2 \left(\frac{2d}{d_0}\right) \leq V(\cC(d_0)) \leq 2 d_0 \log_2 \left(\frac{de}{d_0}\right)
\ee
\end{lemma}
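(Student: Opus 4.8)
The plan is to establish the two inequalities in (\ref{eq:vc}) separately: the lower bound by exhibiting an explicit shattered configuration that exploits sparsity blockwise, and the upper bound by bounding the growth function of $\cC(d_0)$ and solving the resulting combinatorial inequality.

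For the lower bound I would construct a shattered set of $d_0\log_2(2d/d_0)$ design points. Partition the $d$ coordinates into $d_0$ disjoint blocks of size $d'=d/d_0$ each (assume for simplicity that $d/d_0$ and the logarithms below are integers; the general case costs only rounding), and set $t=\log_2(2d')=\log_2(2d/d_0)$. The key observation is that within one block the $d'$ single-coordinate sign classifiers $\bx\mapsto I\{\pm x_j\ge 0\}$ already shatter $t$ points: choosing the coordinate values in $\{-1,+1\}$ so that the sign pattern of coordinate $j$ across the $t$ points runs over one representative of each of the $2^{t-1}=d'$ complementary pairs in $\{0,1\}^t$, the two orientations of the $d'$ coordinates realize all $2^t$ labelings. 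I place such a $t$-point gadget inside each block, with every coordinate \emph{outside} that block set to zero, for a total of $d_0 t$ points.

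Because the points of block $k$ vanish on all coordinates outside block $k$, the value $\bbeta^t\bx$ of any point depends only on the coefficients inside its own block. Hence, given an arbitrary labeling of the whole $d_0 t$-point set, I realize it block by block, using exactly one nonzero coefficient per block to reproduce the desired sub-labeling there; the resulting $\bbeta$ has precisely $d_0$ nonzeros and thus lies in $\cC(d_0)$. This decoupling shows the configuration of $d_0 t=d_0\log_2(2d/d_0)$ points is shattered, which gives the left inequality.

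For the upper bound I would control the growth function $\Pi_{\cC(d_0)}(m)$, the largest number of labelings $\cC(d_0)$ induces on $m$ points. Every $d_0$-sparse classifier has support contained in some size-$d_0$ set $S$, on which it is a homogeneous halfspace in $\mathbb{R}^{d_0}$ (VC-dimension $d_0$); thus $\cC(d_0)$ is the union of $\binom{d}{d_0}$ such subclasses, and Sauer's lemma gives $\Pi_{\cC(d_0)}(m)\le \binom{d}{d_0}\sum_{i=0}^{d_0}\binom{m}{i}\le (ed/d_0)^{d_0}(em/d_0)^{d_0}$. At $m=V(\cC(d_0))$ the left side equals $2^m$, so taking base-$2$ logarithms yields the self-referential bound $m\le d_0\log_2(ed/d_0)+d_0\log_2(em/d_0)$, and substituting the candidate value $m=2d_0\log_2(ed/d_0)$ shows such $m$ cannot be shattered precisely when $2^L>2eL$, where $L=\log_2(ed/d_0)$. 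The main obstacle is exactly this last, transcendental step: the Sauer estimate is self-referential in $m$ and is slightly too lossy when $d$ is close to $d_0$ (equivalently $L$ near $\log_2 e$). I would resolve it by a short case split: for $d/d_0$ large enough the inequality $2^L>2eL$ closes and gives the bound directly, while for bounded $d/d_0$ (where it may fail) I invoke the trivial bound $V(\cC(d_0))\le V(\{\text{all homogeneous halfspaces in }\mathbb{R}^d\})=d$, which in that range is itself below $2d_0\log_2(ed/d_0)$. Combining the two ranges yields $V(\cC(d_0))\le 2d_0\log_2(de/d_0)$.
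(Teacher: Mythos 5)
Your proof is correct and takes essentially the same route as the paper: the lower bound uses the identical blockwise construction (a gadget of $t=\log_2(2d/d_0)$ points per block whose coordinate sign patterns run over the $2^{t-1}=d/d_0$ representatives of complementary pairs, shattered by vectors with exactly one nonzero $\pm1$ entry per block), and the upper bound uses the same union-over-supports Sauer count $2^V \le \binom{d}{d_0}\sum_{k\le d_0}\binom{V}{k} \le (de/d_0)^{d_0}(Ve/d_0)^{d_0}$. The only difference is cosmetic, in how the self-referential inequality is closed: the paper substitutes the trivial bound $V\le d$ directly to get $(Ve/d_0)^{d_0}\le(de/d_0)^{d_0}$, whereas your case split is sound because the failure region of the transcendental condition $2^L>2eL$ (with $L=\log_2(ed/d_0)$) is exactly the region $d\le 2d_0\log_2(ed/d_0)$ where the trivial bound $V\le d$ suffices -- indeed your argument makes explicit the use of $V\le d$ that the paper leaves implicit.
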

Thus, the complexity penalty $Pen(|M|)$ in (\ref{eq:penalty}) is essentially proportional to the VC-dimension of the corresponding class of $|M|$-sparse linear classifiers $\cC(|M|)$.

%\section{Main results} \label{sec:main}
\section{Misclassification excess risk bounds} \label{sec:bounds}
In this section we apply the selected model $\widehat{M}$ in (\ref{eq:binpen}) for
classification and derive the bounds for the corresponding misclassification exceess risk. 

We consider first the fixed design. For a given design matrix $X$ the misclassification error of a classifier $\eta$ is
$R_X(\eta)=\frac{1}{n}\sum_{i=1}^n P(Y_i \neq \eta(\bx_i))$.
Following our previous arguments define a (linear) plug-in classifier 
\be \label{eq:classifier}
\widehat{\eta}_{\widehat M}(\bx)=I\{\widehat{\bbeta}_{\widehat M}^t \bx \geq 0\}
\ee
and consider its misclassification excess risk $\cE_X(\widehat{\eta}_{\widehat M},\eta^*)=E R_X(\widehat{\eta}_{\widehat M})-R_X(\eta^*)$,
where recall that the (ideal) Bayes classifier $\eta^*(\bx)=I\{\bbeta^t \bx \geq 0\}$ with
the true (unknown) $\bbeta$ in (\ref{eq:model}).
The remarkable results of Zhang (2004) and Bartlett, Jordan and McAuliffe (2006) establish
the relations between the Kullback-Leibler risk $EKL({\bp},\widehat{\bp}_{\widehat M})$ and the misclassification excess risk $\cE_X(\widehat{\eta}_{\widehat M},\eta^*)$:
\be \label{eq:bartlett}
\cE_X(\widehat{\eta}_{\widehat M},\eta^*) \leq \sqrt {2 EKL({\bp},\widehat{\bp}_{\widehat M})}
\ee
Thus, (\ref{eq:klupper}) and (\ref{eq:bartlett}) imply immediately the following upper bound for $\cE_X(\widehat{\eta}_{\widehat M},\eta^*)$:
\begin{theorem} \label{th:01upper}
Consider a sparse logistic regression model (\ref{eq:model}) with $||\bbeta||_0 \leq d_0$.
Let ${\widehat M}$ be a model selected in (\ref{eq:binpen}) with the complexity penalty (\ref{eq:penalty}) and consider the corresponding plug-in
classifier $\widehat{\eta}_{\widehat M}(\bx)$ in (\ref{eq:classifier}). Under Assumption (A), 
\be \label{eq:01upper}
\sup_{\eta^* \in \cC(d_0)}\cE_X(\widehat{\eta}_{\widehat M},\eta^*) \leq C_1 ~\sqrt{\frac{1}{\delta(1-\delta)}~\frac{\min\left(d_0
\ln \frac{de}{d_0}, r\right)}{n}}
\ee
for some $C_1>0$, simultaneously for all $1 \leq d_0 \leq r$.
\end{theorem}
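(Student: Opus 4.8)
The plan is to combine the two ingredients already assembled in the text: the comparison inequality (\ref{eq:bartlett}) of Zhang (2004) and Bartlett, Jordan and McAuliffe (2006), and the Kullback--Leibler risk bound (\ref{eq:klupper}) of Abramovich and Grinshtein (2016). The argument is essentially immediate, so the real work lies in organizing the two suprema correctly rather than in any fresh estimate.

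First I would fix an arbitrary true parameter $\bbeta$ with $||\bbeta||_0 \leq d_0$. This single choice fixes both the data-generating distribution through $\bp=(p_1,\ldots,p_n)$ and the Bayes classifier $\eta^*(\bx)=I\{\bbeta^t \bx \geq 0\}\in \cC(d_0)$, and it is under this distribution that both $\cE_X(\widehat{\eta}_{\widehat M},\eta^*)$ and $EKL(\bp,\widehat{\bp}_{\widehat M})$ are evaluated. For this fixed $\bbeta$, the comparison inequality (\ref{eq:bartlett}) — which rests only on the classification-calibration of the logistic (log-)loss and requires nothing beyond the model — gives
\be
\cE_X(\widehat{\eta}_{\widehat M},\eta^*) \leq \sqrt{2\, EKL(\bp,\widehat{\bp}_{\widehat M})}.
\ee
Reading the left-hand supremum in the theorem as ranging over all logistic models whose Bayes classifier lies in $\cC(d_0)$, that is over $\bbeta$ with $||\bbeta||_0\leq d_0$, it ranges over precisely the same parameter set as the supremum in (\ref{eq:klupper}). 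Since $t\mapsto\sqrt{t}$ is increasing, I can take the supremum through the square root and then invoke (\ref{eq:klupper}) under Assumption (A):
\beqn
\sup_{\eta^*\in\cC(d_0)} \cE_X(\widehat{\eta}_{\widehat M},\eta^*)
& \leq & \sqrt{2\,\sup_{\bbeta:\,||\bbeta||_0\leq d_0} EKL(\bp,\widehat{\bp}_{\widehat M})} \nonumber \\
& \leq & \sqrt{2C\,\frac{1}{\delta(1-\delta)}\,\frac{\min\left(d_0\ln\frac{de}{d_0},r\right)}{n}}, \nonumber
\eeqn
which is the asserted bound with $C_1=\sqrt{2C}$.

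There is no genuinely hard step; the points that require care are purely conceptual. The inequality (\ref{eq:bartlett}) must be applied pointwise in the true $\bbeta$ \emph{before} any supremum is taken, and one must verify that the two suprema range over the same index set so that the chaining is legitimate — which is why I insist on the reading of $\sup_{\eta^*\in\cC(d_0)}$ above, noting that $\cE_X$ depends on the full data-generating $\bbeta$ and not merely on $\eta^*$. Finally, the clause ``simultaneously for all $1\leq d_0\leq r$'' comes for free: the selection rule (\ref{eq:binpen}) and the penalty (\ref{eq:penalty}) do not use $d_0$, so the single classifier $\widehat{\eta}_{\widehat M}$ satisfies (\ref{eq:klupper}), and hence the displayed bound, for every $d_0$ at once, exactly the adaptivity to the unknown sparsity already established for the Kullback--Leibler risk.
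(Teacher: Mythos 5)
Your proposal is correct and follows exactly the paper's own route: the theorem is stated there as an immediate consequence of chaining the comparison inequality (\ref{eq:bartlett}) with the Kullback--Leibler risk bound (\ref{eq:klupper}), which is precisely your argument, with your careful handling of the two suprema and of adaptivity in $d_0$ making explicit what the paper leaves implicit.
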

The constant ${\sqrt \frac{1}{\delta (1-\delta)}}$ in 
(\ref{eq:01upper}) is a result of the direct application of (\ref{eq:klupper}) and (\ref{eq:bartlett}). It can be improved by establishing the similar relations between misclassification
excess risk and other losses rather than Kullback-Leibler in
(\ref{eq:bartlett}) and deriving the corresponding upper bounds for their risks. See, for example, the results and the proof of Theorem
\ref{th:01upper_random} below for the random design. In a way, the Kullback-Leibler loss implies the most conservative upper bound (Painsky and Wornell, 2018).

We now show that there exists a
design matrix $X_0$ for which the upper bound for the misclassification excess risk (\ref{eq:01upper}) is essential sharp (up to a probably different constant).

Consider the set of all possible $d_0$-sparse linear classifiers
$\cC(d_0)$ defined in Lemma \ref{lem:vc} and the case, where
a Bayes classifier $\eta^*(\bx)$ is not perfect, that is, $R(\eta^*) > 0$ (aka an {\em agnostic} model). Then, the
following result holds:
\begin{theorem} \label{th:01lower}
Consider a $d_0$-sparse agnostic logistic regression model (\ref{eq:model}), where $2 \leq d_0 \log_2\left(\frac{2d}{d_0}\right) \leq  n$.

Then, there exists a design matrix $X_0 \in \mathbb{R}^{n \times d}$ such that
\be
\label{eq:lower}
\inf_{\tilde  \eta} \sup_{\eta^* \in \cC(d_0)} \cE_{X_0}(\tilde{\eta},\eta^*) \geq
C_2 ~\sqrt{\frac{d_0 \ln \frac{de}{d_0}}{n}}
\ee
for some constant $C_2>0$, where the infimum is taken over all classifiers $\tilde{\eta}$ based on the data $(X_0,\bY)$.
\end{theorem}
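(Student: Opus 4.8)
The plan is to prove a matching minimax lower bound by a direct Assouad-type argument on a carefully constructed design, rather than by transferring the Kullback–Leibler lower bound of Abramovich and Grinshtein (2016). Note that (\ref{eq:bartlett}) bounds the excess risk only \emph{from above} by the KL risk, so a lower bound on KL says nothing about the excess risk, and a genuinely classification-flavoured construction is needed. Because the model is agnostic, I expect the slow rate $\sqrt{V/n}$ with $V = V(\cC(d_0))$, matching $\sqrt{d_0\ln(de/d_0)/n}$ through the VC bound of Lemma \ref{lem:vc}.

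First I would build $X_0$ so that $V := \lfloor d_0\log_2(d/d_0)\rfloor$ distinct points are shattered by $\cC(d_0)$ with \emph{uniform} margins. Partition the $d$ coordinates into $d_0$ blocks of size $N=d/d_0$ and index the coordinates of block $k$ by binary strings; put $m=\log_2 N$ points in block $k$, the $l$-th having entry $2s_l-1\in\{\pm1\}$ in the coordinate labelled $s$ and zeros outside the block. A coefficient vector supported on one coordinate $s^{(k)}$ per block (so $||\bbeta||_0=d_0$) with common value $t>0$ then satisfies $\bbeta^t\bx=\pm t$ at every design point, the sign at the $l$-th point of block $k$ being the $l$-th bit of $s^{(k)}$. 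Choosing the $d_0$ active coordinates realizes every sign pattern $\omega\in\{\pm1\}^V$, so all $2^V$ Bayes classifiers $\eta^*_\omega$ lie in $\cC(d_0)$, and via the logistic link every $p_i\in\{1/2\pm h\}$ with $h=\frac12\tanh(t/2)$. I would then replicate each of the $V$ points $\lfloor n/V\rfloor$ times to fill the $n$ rows of $X_0$; this is possible precisely because the hypothesis $d_0\log_2(2d/d_0)\le n$ guarantees $V\le n$.

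With this design the excess risk separates over points: $\cE_{X_0}(\tilde\eta,\eta^*_\omega)=\frac{2h}{V}\sum_{j=1}^V I\{\tilde\eta(z_j)\neq\eta^*_\omega(z_j)\}$, a Hamming loss between the estimated labels and the pattern $\omega$. I would then apply Assouad's lemma: flipping one bit of $\omega$ changes the law of only the $\lfloor n/V\rfloor$ observations sitting at the corresponding point, from $Bin(1,1/2+h)$ to $Bin(1,1/2-h)$, so by the chain rule and Pinsker's inequality the total variation between neighbouring hypotheses is at most $h\sqrt{8n/V}$. Choosing $h\asymp\sqrt{V/n}$ small enough that this total variation is $\le 1/2$ (which also keeps $h<1/4$ since $V\le n$) yields $\inf_{\tilde\eta}\max_\omega\mE\,\cE_{X_0}(\tilde\eta,\eta^*_\omega)\ge\frac{2h}{V}\cdot\frac{V}{4}=h/2\asymp\sqrt{V/n}$, and $V\asymp d_0\ln(de/d_0)$ (consistent with Lemma \ref{lem:vc}) gives the claim.

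The main obstacle is the design construction: the usual agnostic VC lower bound only needs a shattered set, but here I must additionally (i) honour the logistic link and the sparsity $||\bbeta||_0\le d_0$, so that the hard family genuinely sits inside $\cC(d_0)$, and (ii) obtain \emph{equal} margins $\bbeta^t\bx=\pm t$ at all $V$ points, since $V\gg d_0$ free coefficients cannot interpolate arbitrary magnitudes. The block/bit-encoding above resolves both at once. The remaining ingredients—Assouad's lemma, the $O(h^2)$ bound on the per-observation KL divergence, and the integer rounding of $N=d/d_0$ to a power of two and of $n/V$—are routine.
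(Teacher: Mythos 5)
Your proposal is correct and essentially reproduces the paper's argument: the paper proves Theorem \ref{th:01lower} as the small-margin case of Theorem \ref{th:lowerln}, using the same bit-encoded block design from the proof of Lemma \ref{lem:vc} (shattered points with \emph{uniform} margins, hence $p_i=\tfrac12\pm h$), replication of the $V$ rows, reduction of general classifiers to the hypercube $\{0,1\}^V$, and Assouad's lemma with the effective margin $\tilde h\asymp\sqrt{V/n}$ — the paper via the Hellinger form of Assouad (Lemma 7 of Barron, Birg\'e and Massart, 1999) where you use the equivalent KL-plus-Pinsker route. The one nitpick is that your $V=\lfloor d_0\log_2(d/d_0)\rfloor$ degenerates when $d<2d_0$ (allowed under the theorem's hypothesis); letting the single nonzero entry of $\bbeta$ in each block take either sign $\pm t$, as the paper's supporting vectors do, doubles the patterns per block and recovers $V=d_0\log_2(2d/d_0)$.
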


%The condition (\ref{eq:vccond} ensures that $2 \leq V(\cC(d_0)) \leq n$
%(see Lemma \ref{lem:vc}).
Theorem \ref{th:01lower} is a particular case of Theorem \ref{th:lowerln} from
Section \ref{sec:lownoise} below.

The upper and lower bounds established in Theorem
\ref{th:01upper} and Theorem \ref{th:01lower} allow one to derive the asymptotic minimax
rate for misclassification excess risk in sparse logistic regression model as $n$ 
increases.  We allow the number of features $d$ to increase with $n$ as well and even faster
than $n$ ($d \gg n$ setup). The following immediate Corollary \ref{cor:minimax}  shows that  the proposed classifier $\widehat{\eta}_{\widehat M}$ is asymptotically minimax in terms of ``the worst case'' design and
adaptive to the unknown sparsity:

\begin{corollary} \label{cor:minimax}
Consider a $d_0$-sparse logistic regression agnostic model (\ref{eq:model}), where $d_0$ satisfies $2 \leq d_0 \log_2\left(\frac{2d}{d_0}\right) \leq n$.  Then, as $n$ and $d$ increase, for a fixed $\delta$ in  Assumption (A),
\begin{enumerate}
\item The asymptotic minimax misclassification excess risk  
$\sup_X \inf_{\tilde  \eta} \sup_{\eta^* \in \cC(d_0)} \cE_X(\tilde{\eta},\eta^*)$ is of the order 
$$
\sqrt{\frac{d_0 \ln\left(\frac{de}{d_0}\right)}{n}} 
\sim \sqrt{\frac{V(\cC(d_0))}{n}} 
$$ 
\item The classifier $\widehat{\eta}_{\widehat M}$ defined in (\ref{eq:classifier}), where
the model $\widehat{M}$ was selected by (\ref{eq:binpen}) with the complexity penalty
(\ref{eq:penalty}), attains the minimax rates 
simultaneously for all $2 \leq d_0 \log_2\left(\frac{2d}{d_0}\right) \leq n$. 
\end{enumerate}
\end{corollary}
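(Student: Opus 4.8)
The plan is to assemble the corollary directly from the matching upper and lower bounds already established, so the argument is essentially bookkeeping rather than new analysis — which is why the text calls it immediate. First I would fix $\delta$, so that the factor $\frac{1}{\delta(1-\delta)}$ appearing in Theorem~\ref{th:01upper} becomes an absolute constant that can be absorbed into the constant in front of the rate. For the upper half of part~(1), I would start from Theorem~\ref{th:01upper}, which guarantees that the single data-driven classifier $\widehat{\eta}_{\widehat M}$ satisfies
$$\sup_{\eta^* \in \cC(d_0)} \cE_X(\widehat{\eta}_{\widehat M},\eta^*) \le C_1\sqrt{\tfrac{1}{\delta(1-\delta)}\,\tfrac{\min(d_0 \ln(de/d_0),\,r)}{n}}$$
for \emph{every} design $X$. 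Since any fixed estimator dominates the infimum over all estimators, and since $\min(d_0\ln(de/d_0),r)\le d_0\ln(de/d_0)$, this bound survives both $\inf_{\tilde\eta}$ and the subsequent $\sup_X$, yielding
$$\sup_X \inf_{\tilde\eta}\sup_{\eta^*\in\cC(d_0)}\cE_X(\tilde\eta,\eta^*) = O\!\left(\sqrt{\tfrac{d_0\ln(de/d_0)}{n}}\right).$$

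For the lower half I would invoke Theorem~\ref{th:01lower}, which exhibits a single design $X_0$ for which the minimax risk is at least $C_2\sqrt{d_0\ln(de/d_0)/n}$; because $\sup_X$ dominates the value at $X_0$, the same rate is a lower bound on the worst-case minimax risk. Combining the two displays pins down the order as $\sqrt{d_0\ln(de/d_0)/n}$, and the equivalence with $\sqrt{V(\cC(d_0))/n}$ follows from Lemma~\ref{lem:vc}, whose two-sided estimate gives $V(\cC(d_0))\asymp d_0\ln(de/d_0)$ up to absolute constants. This establishes part~(1).

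Part~(2) — adaptivity — requires no further estimates: the key structural point is that the selection rule (\ref{eq:binpen}) with penalty (\ref{eq:penalty}) never uses $d_0$, and the upper bound of Theorem~\ref{th:01upper} holds \emph{simultaneously} for all $1\le d_0\le r$. Hence the one classifier $\widehat{\eta}_{\widehat M}$ attains the minimax rate of part~(1) for every admissible $d_0$ at once, with no tuning to the unknown sparsity. The only point that deserves a moment's care is that the regime $2\le d_0\log_2(2d/d_0)\le n$ is precisely the hypothesis under which Theorem~\ref{th:01lower} is available and under which the lower-bound design $X_0$ can be realized with rank large enough that the $d_0\ln(de/d_0)$ term, rather than the truncation by $r$, governs the rate; within this range the upper bound of part~(1) and the lower bound of Theorem~\ref{th:01lower} are therefore of the same order. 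Everything else is an immediate consequence of the results cited above.
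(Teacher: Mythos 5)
Your proposal is correct and follows exactly the route the paper intends when it calls the corollary ``immediate'': combine the design-uniform upper bound of Theorem~\ref{th:01upper} (with $\delta$ fixed and $\min(d_0\ln(de/d_0),r)\le d_0\ln(de/d_0)$) with the lower bound at the explicit design $X_0$ from Theorem~\ref{th:01lower}, then identify the rate with $\sqrt{V(\cC(d_0))/n}$ via the two-sided estimate of Lemma~\ref{lem:vc}, and note that the selection rule (\ref{eq:binpen})--(\ref{eq:penalty}) is free of $d_0$ so the bound holds simultaneously over the stated range. One tiny quibble: your concern about realizing $X_0$ with large enough rank is unnecessary, since Theorem~\ref{th:01lower}'s bound contains no $r$-truncation and the $\min$ in the upper bound only helps.
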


Finally, we note that if the considered logistic regression model is misspecified and the Bayes
classifier $\eta^*$ is not linear, we still have the
following risk decomposition
\be \label{eq:decomp}
R_X(\widehat{\eta}_{\widehat M})-R_X(\eta^*)=\left(R_X(\widehat{\eta}_{\widehat M})-R_X(\eta^*_L)\right)+
\left(R_X(\eta^*_L)-R_X(\eta^*)\right),
\ee
where $\eta^*_L=\arg \min_{\eta \in \cC(d)} R_X(\eta)$ is the best (ideal) linear classifier.  
Our previous arguments can then be applied to the first term in the RHS of (\ref{eq:decomp})
representing the estimation error, while the second term is an approximation
error and measures the ability of linear classifiers to perform as good as $\eta^*$.
Enriching the class of classifiers may improve
the approximation error but will necessarily increase the estimation error in (\ref{eq:decomp}). 
In a way, it is similar to the variance/bias tradeoff in regression.

\section{Tighter risk bounds under low-noise condition} \label{sec:lownoise}
The main challenges for any classifier occur near the 
the boundary $\{\bx: p(\bx)=1/2\}$ (equivalently, a hyperplane $\bbeta^t \bx=0$ for the logistic regression model), where 
it is hard to predict the class label accurately. However, for regions, where $p(\bx)$ is bounded away from $1/2$ (margin or aka low-noise condition), the bounds for misclassification excess risk established in the previous
Section \ref{sec:bounds} can be improved. Following Massart and N\'ed\'elec (2006)  introduce the following low-noise assumption:
\begin{assumption}[\bf B] \label{as:B}   
Consider the logistic regression model (\ref{eq:model}) and assume that there exists $0 \leq h< 1/2$ such that
\be \label{eq:asb}
|p_i-1/2| \geq h \;\;{\rm or,\;equivalently,}\;\; 
|\bbeta^t \bx_i| \geq \ln\left(\frac{1+2h}{1-2h}\right)
\ee
for all $i=1,\ldots,n$.  
\end{assumption}
%For random design, somewhat more general low noise conditions were %considered in Mammen and Tsybakov (1999) and
%Tsybakov (2004).
Assumption (B) essentially assumes the existence of the ``corridor'' of width $2 \ln\left(\frac{1+2h}{1-2h}\right)$ that
separates the two sets $\{\bx_i:\bbeta^t \bx_i >0,\;i=1,\ldots,n\}$ and $\{\bx_i: \bbeta^t \bx_i <0,\;i=1,\ldots,n\}$.

For a given design matrix $X$, define $\cC_X(d_0,h)=\{\eta: \eta \in \cC(d_0),\;|\bbeta^t \bx_i| \geq \ln \left(\frac{1+2h}{1-2h}\right),\;i=1,\ldots,n\}$. Evidently, $\cC_X(d_0,0)=\cC(d_0)$ for any $X$.

Theorem \ref{th:upperln} below establishes the upper bound for
the misclassification excess risk of  the proposed classifier $\widehat{\eta}_{\widehat M}$ under the additional low noise
Assumption (B):

\begin{theorem} \label{th:upperln}
Consider a sparse logistic regression model (\ref{eq:model}), where $||\bbeta||_0 \leq d_0$. Assume that
there exist $0 < h < \Delta < 1/2$  such that 
\be \label{eq:hcond}
h \leq  |p_i-1/2| \leq \Delta
\ee
for all $i=1,\ldots,n$.

Let ${\widehat M}$ be a model selected in (\ref{eq:binpen}) with the complexity penalty (\ref{eq:penalty}) and consider the corresponding
classifier $\widehat{\eta}_{\widehat M}(\bx)$ in (\ref{eq:classifier}). Then, for all $ 1 \leq d_0 \leq r$,
\be \label{eq:upperln}
\sup_{\eta^* \in \cC_X(d_0,h)}\cE_X(\widehat{\eta}_{\widehat M},\eta^*) \leq C_1 \min \left(\sqrt{\frac{1-4h^2}{1-4\Delta^2}~\frac{\min\left(d_0
\ln \frac{de}{d_0}, r\right)}{n}}~,~\frac{1-4h^2}{1-4\Delta^2}~\frac{\min\left(d_0 \ln \frac{de}{d_0}, r\right)}{nh}\right)
\ee
for some $C_1>0$.
\end{theorem}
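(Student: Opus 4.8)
The plan is to bound the misclassification excess risk by two separate quantities — a ``slow-rate'' bound coming from the calibration inequality (\ref{eq:bartlett}) and a ``fast-rate'' bound obtained by exploiting the lower margin $h$ — and to show that both reduce to a single sharpened Kullback--Leibler risk bound; taking the smaller of the two then produces the minimum in (\ref{eq:upperln}). Throughout write $\widehat p_i=\widehat p_{\widehat M i}$, and note that (\ref{eq:hcond}) is exactly Assumption (A) with $\delta=1/2-\Delta$ and that it translates into the two pointwise variance bounds $\frac{1-4\Delta^2}{4}\le p_i(1-p_i)\le\frac{1-4h^2}{4}$, since $p_i(1-p_i)=1/4-(p_i-1/2)^2$.

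First I would rewrite the excess risk as a margin-weighted count of the points at which the plug-in classifier disagrees with the Bayes rule. A standard computation gives
\[
\cE_X(\widehat\eta_{\widehat M},\eta^*)=\frac{2}{n}\,E\sum_{i=1}^n |p_i-1/2|\,I\{\widehat\eta_{\widehat M}(\bx_i)\neq\eta^*(\bx_i)\},
\]
because the per-point excess misclassification probability equals $2|p_i-1/2|$ on a disagreement and $0$ otherwise. A disagreement at $i$ forces $\widehat p_i$ and $p_i$ onto opposite sides of $1/2$, so that $|\widehat p_i-p_i|\ge|p_i-1/2|\ge h$; this is the only place the lower margin is used.

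For the fast-rate term I would use, on each disagreement point, $|p_i-1/2|=(p_i-1/2)^2/|p_i-1/2|\le(\widehat p_i-p_i)^2/h$, and combine it with Pinsker's inequality for Bernoulli laws, $KL(p_i,\widehat p_i)\ge 2(\widehat p_i-p_i)^2$, to obtain $\cE_X(\widehat\eta_{\widehat M},\eta^*)\le \tfrac1h\,EKL(\bp,\widehat\bp_{\widehat M})$. For the slow-rate term I would simply invoke (\ref{eq:bartlett}), $\cE_X(\widehat\eta_{\widehat M},\eta^*)\le\sqrt{2\,EKL(\bp,\widehat\bp_{\widehat M})}$. Thus both terms in (\ref{eq:upperln}) follow at once once I have a Kullback--Leibler risk bound of the sharpened form
\[
EKL(\bp,\widehat\bp_{\widehat M})\le C\,\frac{1-4h^2}{1-4\Delta^2}\,\frac{\min\!\big(d_0\ln\tfrac{de}{d_0},\,r\big)}{n}.
\]

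I expect the main obstacle to be establishing this refined bound. Plugging $\delta=1/2-\Delta$ into (\ref{eq:klupper}) only yields the factor $\frac{4}{1-4\Delta^2}$, which misses the numerator $1-4h^2$, and a per-point inspection shows that no calibration inequality can recover it — the improvement must reside in the risk bound itself. I would therefore revisit the argument of Abramovich and Grinshtein (2016) underlying (\ref{eq:klupper}), keeping the two variance factors separate instead of collapsing them into $1/[\delta(1-\delta)]$. The deterministic part of the penalized-likelihood comparison is controlled below by the curvature of the log-likelihood, whose Hessian $\sum_i p_i(1-p_i)\bx_i\bx_i^t$ is bounded below using $p_i(1-p_i)\ge\frac{1-4\Delta^2}{4}$, supplying the denominator; the stochastic part is an empirical process in the centered responses $Y_i-p_i$, whose variances $p_i(1-p_i)\le\frac{1-4h^2}{4}$ enter the Bernstein/peeling bound taken uniformly over all models in $\M$, supplying the numerator, with the penalty (\ref{eq:penalty}) absorbing the combinatorial factors exactly as in the proof of (\ref{eq:klupper}). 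Converting the controlled likelihood excess back into $KL$ through the curvature then yields the ratio $\frac{1-4h^2}{1-4\Delta^2}$ in place of $\frac1{\delta(1-\delta)}$. Finally, taking the minimum of the slow- and fast-rate bounds gives precisely (\ref{eq:upperln}).
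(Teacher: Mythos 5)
Your proposal is correct and follows the same overall strategy as the paper: sharpen the Kullback--Leibler risk bound (\ref{eq:klupper}) to carry the factor $\frac{1-4h^2}{1-4\Delta^2}$, get the slow-rate term from (\ref{eq:bartlett}), get a fast-rate term of order $EKL(\bp,\widehat{\bp}_{\widehat M})/h$, and take the minimum. The one genuine difference is in how the fast-rate calibration inequality is obtained. The paper invokes Theorem 3 of Bartlett, Jordan and McAuliffe (2006) with $\psi(f)=\frac{1}{2}\left((1-f)\ln(1-f)+(1+f)\ln(1+f)\right)\geq f^2/2$, $\alpha=1$ and $c=1/(2h)$, arriving at $\cE_X(\widehat{\eta}_{\widehat M},\eta^*)\leq \frac{4}{h}\,EKL(\bp,\widehat{\bp}_{\widehat M})$. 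You instead derive the same type of bound from first principles: the disagreement identity $\cE_X=\frac{2}{n}E\sum_i|p_i-1/2|\,I\{\widehat{\eta}_{\widehat M}(\bx_i)\neq\eta^*(\bx_i)\}$, the observation that a disagreement forces $|\widehat{p}_i-p_i|\geq|p_i-1/2|\geq h$, and Pinsker's inequality $KL(p_i,\widehat{p}_i)\geq 2(\widehat{p}_i-p_i)^2$, yielding $\cE_X\leq\frac{1}{h}EKL$ — elementary, self-contained, and with a slightly better constant; what the citation-based route buys is generality (it extends directly to other margin exponents and losses, as the paper exploits in Theorem \ref{th:01upper_randoml}). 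On the sharpened KL bound itself, your account of where the two variance factors enter — the lower curvature bound $p_i(1-p_i)\geq\frac{1-4\Delta^2}{4}$ over the parameter set (i.e., Assumption (A) with $\delta=1/2-\Delta$) controlling the deterministic/strong-convexity part, and the variance bound $Var(Y_i)\leq\frac{1-4h^2}{4}$ at the true parameters entering the stochastic term in the centered responses — is exactly the adaptation of Abramovich and Grinshtein (2016) that the paper asserts in one line (and it mirrors the visible division of labor between $\cu$ and $\cl$ in the proof of Theorem \ref{th:slopeglm}), so your level of detail here matches or exceeds the paper's own.
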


Thus, if the margin parameter $h$ is large enough, namely, $h >\sqrt{\frac{d_0 \ln \frac{de}{d_0}}{n}}$,  the misclassification excess risk bound (\ref{eq:01upper}) is reduced.  The classifier $\widehat{\eta}_{\widehat M}(\bx)$ does not depend on $h$ and the procedure is inherently adaptive to its value.
%Similar results for random design were obtained 
%in Massart and N\'ed\'elec (2006).

\ignore{For $\cC_X(d_0,h)$, the  minimax lower bound for the misclassification excess risk is given by the following Theorem \ref{th:lowerln}~:}

Similar to the previous Section \ref{sec:bounds}, one can construct
a design matrix for which the upper bound (\ref{eq:upperln}) is sharp: 
\begin{theorem} \label{th:lowerln}
Consider a $d_0$-sparse agnostic logistic regression model (\ref{eq:model}) with $2 \leq d_0 \log_2\frac{2d}{d_0} \leq  n$.

There exists a design matrix $X_0 \in \mathbb{R}^{n \times d}$ such that under Assumption (B)
\be \label{eq:lowerln}
\inf_{\tilde{\eta}} \sup_{\eta^* \in \cC_{X_0}(d_0,h)} \cE_{X_0}(\tilde{\eta},\eta^*) \geq C_2~ \min\left(\sqrt{\frac{d_0 \ln \frac{de}{d_0}}{n}}~,~\frac{d_0 \ln \frac{de}{d_0}}{nh}\right)
\ee
for some $C_2>0$.
\end{theorem}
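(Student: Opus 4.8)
The plan is to construct an explicit design matrix $X_0$ and a finite sub-family of $d_0$-sparse classifiers in $\cC_{X_0}(d_0,h)$ that are hard to distinguish from the data, and then invoke a standard minimax lower-bound machinery (Fano's method or Assouad's lemma) to convert this indistinguishability into the claimed bound. The key insight is that the minimax risk over $\cC_{X_0}(d_0,h)$ is governed by how many genuinely different sparse linear classifiers can be ``packed'' while keeping their induced distributions on $\bY$ close in Kullback-Leibler divergence. Since the lower bound is a minimum of two terms, I expect the two regimes to be handled separately: the $\sqrt{d_0 \ln(de/d_0)/n}$ term should dominate when $h$ is small (the construction there can reuse the $h=0$ agnostic case, recovering Theorem \ref{th:01lower}), while the $d_0 \ln(de/d_0)/(nh)$ term should dominate when $h$ is large, reflecting the improvement under the low-noise Assumption (B).

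First I would design $X_0$ so that its rows decompose into blocks, with a small number of ``free'' coordinates carrying the hard classification subproblem and the remaining structure chosen so that any element of the packing set automatically satisfies the corridor condition $|\bbeta^t \bx_i| \geq \ln\big(\frac{1+2h}{1-2h}\big)$, hence lies in $\cC_{X_0}(d_0,h)$; concretely, I would place the margin parameter $h$ directly into the design by setting the relevant inner products to magnitude exactly $\ln\big(\frac{1+2h}{1-2h}\big)$, which makes $p_i - 1/2 = \pm h$ and thus controls both the noise level and the per-observation information. Next I would build a packing set of sparse coefficient vectors indexed by a binary hypercube (for Assouad) or by a large combinatorial code on the support (for Fano), where flipping one bit changes the predicted label on a controlled number of design points. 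The separation in misclassification excess risk between two hypotheses is then proportional to $h$ times the number of points on which they disagree, while the KL divergence between the two data laws scales like (number of disagreeing points)$\times h^2$ by a Taylor expansion of the logistic KL around the margin, using that $|p_i - 1/2| = h$ is small.

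The main obstacle will be balancing these two quantities to extract the correct minimum. The counting of sparse supports gives the $d_0 \ln(de/d_0)$ factor (the same combinatorial volume that appears in the VC-dimension bound of Lemma \ref{lem:vc} and in the penalty \eqref{eq:penalty}), so after applying Fano's inequality the risk is bounded below by the hypothesis separation times $\big(1 - \frac{I + \ln 2}{\ln |\mathcal{H}|}\big)$, where $I$ is the mutual information controlled by $n$, $h$, and the number of flipped coordinates. Choosing the number of disagreeing design points per hypothesis (equivalently, tuning the design weights) optimally trades the separation against the information constraint: allocating more points sharpens separation but raises $I$, and the optimum yields separation of order $\min\big(\sqrt{d_0\ln(de/d_0)/n},\, d_0\ln(de/d_0)/(nh)\big)$. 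I would verify the two endpoints of this trade-off correspond exactly to the two terms in \eqref{eq:lowerln}, check that the chosen hypotheses remain $d_0$-sparse and agnostic (so $R(\eta^*)>0$, forcing genuine noise rather than a separable problem), and confirm that Assumption (B) holds with the prescribed $h$ throughout the construction.
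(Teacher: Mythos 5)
Your proposal follows essentially the same route as the paper's proof: the paper likewise builds $X_0$ by replicating ($\varkappa$ copies each) the $V=d_0\log_2(2d/d_0)$ rows shattered by $d_0$-sparse half-spaces constructed in the proof of Lemma \ref{lem:vc}, places the margins exactly at $|\bbeta^t \bx_i|=\ln\frac{1+2h}{1-2h}$ so that $p_i=\frac{1}{2}\pm h$, indexes the hypotheses by the hypercube $H^V$, applies Assouad's lemma (in the Hellinger form of Barron, Birg\'e and Massart, 1999, with per-point divergence $1-\sqrt{1-4h^2}\asymp h^2$) and tunes the replication factor $\varkappa \asymp \min(h^{-2}, n/V)$ to produce exactly the two regimes you identify. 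The only detail left implicit in your sketch is the regime where $h$ is bounded away from zero (the paper treats $h>1/6$ separately with $\varkappa=1$ and a direct averaging argument over the hypercube, since the $h^2$-based information bound degenerates there), but this is a refinement within the same argument rather than a different approach.
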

The design matrix
$X_0$ is constructed explicitly in the proof of Theorem \ref{th:lowerln} in the Appendix.
Note that Theorem \ref{th:01lower} may be viewed as a particular case of
Theorem \ref{th:lowerln} for $h=0$.

\section{Logistic Slope classifier} \label{sec:slope}
Solving for $\widehat{M}$ in (\ref{eq:binpen}) requires generally a combinatorial search over all possible models in $\M$ that makes the use of complexity penalties to be computationally problematic when the number of features is large.
Greedy algorithms (e.g., forward selection) approximate the global solution of (\ref{eq:binpen}) by a stepwise sequence of local ones. However, they require 
strong constraints on the design matrix $X$ that can hardly hold for high-dimensional data. A more reasonable approach is convex relaxation, where the
original combinatorial problem is replaced by a related convex surrogate. Thus,
for linear-type complexity penalties of the form $Pen(|M|)=\lambda |M|=\lambda
||\bbeta||_0$, the celebrated Lasso replaces the $l_0$-(quasi) norm by $l_1$-norm:
%\be \label{eq:lassobin}
$$
\widehat{\bbeta}_{Lasso}=\arg \min_{\widetilde{\bbeta}} 
\left\{\sum_{i=1}^n \left( \ln\left(1+\exp(\widetilde{\bbeta}^t \bx_i)\right)-
\widetilde{\bbeta}^t \bx_i Y_i \right)  + \lambda ||\widetilde{\bbeta}||_1 \right\}
$$
%\ee
Assume that all the columns of the design matrix $X$ are normalized to have  unit norms. From the results of van de Geer (2008) it follows that under an assumption similar to Assumption (A) and certain
extra conditions on $X$, the logistic Lasso with a tuning parameter $\lambda$ 
of the order
$\sqrt{\ln d}$ results in sub-optimal Kullback-Leibler risk
$O\left(\frac{d_0}{n}\ln d \right)$ 
and, therefore, sub-optimal misclassification excess risk $O\left(\sqrt{\frac{d_0}{n}\ln d}\right)$. For Gaussian regression, Bellec, Lecu\'e and Tsybakov (2018) showed that under certain conditions on $X$, Lasso can achieve the optimal rate with {\em adaptively chosen} $\lambda$ by Lepski procedure.

Recently, 
for Gaussian regression, Bogdan {\em et al.} (2015) suggested the Slope estimator --  a penalized maximum likelihood estimator with a
{\em sorted} $l_1$-norm penalty defined as follows:
\be \label{eq:slopegauss}
\widehat{\bbeta}_{Slope}=\arg \min_{\widetilde{\bbeta}}\left\{||\bY-X\widetilde{\bbeta}||_2^2+
\sum_{j=1}^d \lambda_j |\widetilde{\bbeta}|_{(j)}\right\},
\ee
where $||\cdot||_2$ denotes the Euclidean norm in $\mathbb{R}^n$, 
$|\widetilde{\bbeta}|_{(1)} \geq \ldots \geq |\widetilde{\bbeta}|_{(d)}$ are the descendingly ordered absolute values of $\widetilde{\bbeta}_j$'s and $\lambda_1 \geq \ldots \geq \lambda_d > 0$ are the tuning parameters. It is a convex minimization problem.
Unlike the constant $\lambda$ in Lasso, there is a sequence of
decreasing $\lambda_j$ in Slope. 
Bellec, Lecu\'e and Tsybakov (2018)
proved that under a weighted restricted eigenvalue condition on $X$ with normalized columns, the
quadratic risk of the Slope estimator (\ref{eq:slopegauss}) with $\lambda_j=A \sqrt{\ln(2d/j)}$ for a certain constant $A$
is of the (rate-optimal) order $O\left(\frac{d_0}{n}\ln(\frac{de}{d_0})\right)$. 

We will now extend the above results for Slope for logistic regression and, in fact, for a general GLM (see the Appendix C). Naturally modifying the definition of the Slope estimator for the considered logistic regression model (\ref{eq:model}), define 
\be \label{eq:slopebin}
\widehat{\bbeta}_{Slope}=\arg \min_{\widetilde{\bbeta}}
\left\{\sum_{i=1}^n \left( \ln\left(1+\exp(\widetilde{\bbeta}^t \bx_i)\right)-
\widetilde{\bbeta}^t \bx_i Y_i \right) + \sum_{j=1}^d \lambda_j |\widetilde{\bbeta}|_{(j)}\right\},
\ee
where $\lambda_1 \geq \ldots \geq \lambda_d > 0$. 
Note that (\ref{eq:slopebin}) is also a convex program that makes the logistic Slope
estimator computationally feasible for high-dimensional data. The corresponding estimated probabilities
$\widehat{p}_{Slope,i}=\frac{\exp(\widehat{\bbeta}_{Slope}^t \bx_i)}{1+\exp(\widehat{\bbeta}_{Slope}^t \bx_i)},\;i=1,\ldots,n$.

As usual, any convex relaxation requires certain extra conditions on the restricted minimal eigenvalue of the design matrix $X$ over some set of vectors. 
In particular, similar to Gaussian regression considered in Bellec, Lecu\'e and Tsybakov (2018),
we assume the following {\em Weighted Restricted Eigenvalue} (WRE) condition for Slope estimator (\ref{eq:slopebin})~:
\begin{assumption}{(\bf $WRE(d_0,c_0)$ condition)}
Consider the sparse logistic regression model (\ref{eq:model}) with 
$||\bbeta||_0 \leq d_0$, where the columns of the design matrix $X$ are normalized to have unit norms.
Consider the set $\cS(d_0,c_0)=\{\bu \in \mathbb{R}^d: \sum_{j=1}^d \sqrt{\ln(2d/j)} |u|_{(j)} 
\leq (1+c_0) ||\bu||_2~ \sqrt{\sum_{j=1}^{d_0} \ln(2d/j)} \}$ and assume that
$X\bu \neq {\bf 0}$ for any $\bu \neq {\bf 0} \in \cS(d_0,c_0)$.
\end{assumption}
An interesting discussion on the relations between the WRE condition and the restricted eigenvalue condition (RE) required for Lasso is given in Section 8 of Bellec, Lecu\'e and Tsybakov (2018).

Define a restricted minimal eigenvalue $\kappa(d_0,c_0)$ as follows~:
$$
\kappa(d_0,c_0)=\min_{\bu \in \cS(d_0,c_0); \bu \neq {\bf 0}} \frac{||X\bu||_2}{||\bu||_2} > 0
$$

%Under the above additional condition $WRE(d_0,c_0)$ on $X$ we have 
\begin{theorem} \label{th:slope}
Consider a sparse logistic regression model (\ref{eq:model}), where $||\bbeta||_0 \leq d_0$, the columns of the design matrix $X$ are normalized to have unit norms and, in addition, $X$ satisfies the $WRE(d_0,c_0)$ condition for 
some $c_0>1$. Assume that Assumption (A) holds. 

Let the tuning parameters 
\be \label{eq:lambda}
\lambda_j= A~\frac{c_0+1}{c_0-1} \sqrt{\ln(2d/j)},\;\;\;j=1,\ldots,d
\ee
with the constant $A \geq 20 \sqrt{6}$.

Then,
\be  \label{eq:klslope}
\sup_{\bbeta: ||\bbeta||_0 \leq d_0} \mE KL({\bp},\widehat{\bp}_{Slope}) \leq 8A^2~
\frac{c_0^2}{(c_0-1)^2}~\frac{1}{\delta (1-\delta)}
\left(\frac{2\pi+8}{\ln(2d)}+\frac{1}{\kappa^2(d_0,c_0)}\right)
\frac{d_0}{n} \ln\left(\frac{2de}{d_0}\right)
\ee
for all $1 \leq d_0 \leq r$.

\ignore{
for the corresponding logistic Slope estimator (\ref{eq:slopebin}) with probability at least $1-\Delta$,
\be \label{eq:klslope}
KL({\bp},\widehat{\bp}_{Slope}) \leq C^*
\frac{1}{\delta (1-\delta)} \max\left\{\frac{(\sqrt{\pi/2}+\sqrt{2\ln \Delta^{-1}})^2}{d_0 \ln(2d/d_0)}~,~\frac{1}{\kappa^2(d_0,c_0)}\right\} \frac{d_0}{n} \ln\left(\frac{2de}{d_0}\right)
\ee
simultaneously for all $\bbeta \in \mathbb{R}^d$ such that $||\bbeta||_0 \leq d_0$, where $C^*=19200~ c_0^2/(c_0-1)^2$.
}
\end{theorem}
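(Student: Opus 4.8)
The plan is to follow the template that Bellec, Lecu\'e and Tsybakov (2018) developed for Gaussian Slope, replacing the exact quadratic loss by the Bregman divergence of the logistic log-partition function $b(u)=\ln(1+e^u)$. Writing $\bu=\bbetas-\bbeta$ and $H(\bw)=\sum_{j=1}^d\lambda_j|w|_{(j)}$ for the sorted penalty, I would start from the defining inequality of $\bbetas$ in (\ref{eq:slopebin}): plugging the true $\bbeta$ into the objective and rearranging gives $\ell(\bbeta)-\ell(\bbetas)\le H(\bbeta)-H(\bbetas)$. The key observation is that the left-hand side splits into a stochastic score term and a deterministic curvature term: with $\bxi=X^t(\bY-\bp)$ the gradient of the log-likelihood at $\bbeta$, one has $\ell(\bbeta)-\ell(\bbetas)=-\langle\bxi,\bu\rangle+D(\bbetas,\bbeta)$, where $D$ is the empirical Bregman divergence of $b$. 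Since for Bernoulli natural parameters this Bregman divergence equals exactly the summed Kullback--Leibler divergence, $D(\bbetas,\bbeta)=n\,KL(\bp,\widehat{\bp}_{Slope})$, the basic inequality becomes $n\,KL(\bp,\widehat{\bp}_{Slope})\le\langle\bxi,\bu\rangle+H(\bbeta)-H(\bbetas)$, which is the logistic analogue of the starting point in the Gaussian proof.

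Second, I would control the stochastic term. Because $|Y_i-p_i|\le 1$ and the columns of $X$ are unit-normalized, each coordinate $\xi_j=\sum_i(Y_i-p_i)x_{ij}$ is a bounded centered sum and hence sub-Gaussian; the sorted maximal inequality for such variables (as in Bogdan et al., 2015 and Bellec, Lecu\'e and Tsybakov, 2018) yields $|\xi|_{(j)}\lesssim\sqrt{\ln(2d/j)}$ simultaneously in $j$ on an event of large probability. Calibrating the penalty weights $\lambda_j=A\frac{c_0+1}{c_0-1}\sqrt{\ln(2d/j)}$ with $A\ge20\sqrt6$ then makes the rearrangement bound $\langle\bxi,\bu\rangle\le\sum_j|\xi|_{(j)}|u|_{(j)}$ dominated by a fraction of $H(\bu)$ governed by $\frac{c_0-1}{c_0+1}$. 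Feeding this back into the basic inequality and using the sorted-$\ell_1$ head-minus-tail inequality for the $d_0$-sparse $\bbeta$, namely $H(\bbeta)-H(\bbetas)\le\sum_{j\le d_0}\lambda_j|u|_{(j)}-\sum_{j>d_0}\lambda_j|u|_{(j)}$, forces $\bu$ into the cone $\cS(d_0,c_0)$; this is precisely the role of the choice $c_0>1$ and of the ratio $\frac{c_0+1}{c_0-1}$.

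Third --- and this is the main obstacle relative to the Gaussian case --- I must lower-bound $n\,KL(\bp,\widehat{\bp}_{Slope})=D(\bbetas,\bbeta)$ by a positive multiple of $\|X\bu\|_2^2$. Unlike the quadratic loss, the Bregman divergence of $b$ has curvature $b''(u)=p(1-p)$ that is bounded below by $\delta(1-\delta)$ (Assumption (A)) only where the argument is bounded, whereas $\bbetas^t\bx_i$ is not controlled a priori. I would resolve this by the usual localization device: exploiting convexity of the objective, I replace $\bbetas$ by the point $\bbeta+\tau\bu$ on the segment at which $\|X\tau\bu\|_2$ equals a fixed radius, transfer the basic inequality to this point, and use the self-concordance of the logistic $b$ to obtain restricted strong convexity with curvature of order $\delta(1-\delta)$ inside the ball; one then verifies that the radius is in fact attained at $\tau=1$. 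This yields $D(\bbetas,\bbeta)\ge\tfrac12\delta(1-\delta)\|X\bu\|_2^2$, and since $\bu\in\cS(d_0,c_0)$ the $WRE(d_0,c_0)$ condition gives $\|X\bu\|_2^2\ge\kappa^2(d_0,c_0)\|\bu\|_2^2$. These two factors are the source of the $\frac1{\delta(1-\delta)}$ and $\frac1{\kappa^2(d_0,c_0)}$ appearing in (\ref{eq:klslope}).

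Finally, I would combine the pieces into a single inequality of the form $n\,KL\lesssim H(\bu)\lesssim\big(\sum_{j\le d_0}\lambda_j^2\big)^{1/2}\|\bu\|_2$, bound the remaining sorted term on the cone via $\sum_{j=1}^{d_0}\ln(2d/j)\le d_0\ln(2de/d_0)$, and solve the resulting quadratic inequality in $\|\bu\|_2$ (equivalently in $\sqrt{KL}$), using $\|\bu\|_2^2\le\|X\bu\|_2^2/\kappa^2\le 2n\,KL/(\delta(1-\delta)\kappa^2)$ to close the loop. This produces the deterministic bound on the high-probability event. To pass from the high-probability version to the stated bound in expectation, I would integrate the sub-Gaussian tail of the stochastic event; the Gaussian-type tail integral is what contributes the additive $\frac{2\pi+8}{\ln(2d)}$ term, while the leading constant $8A^2c_0^2/(c_0-1)^2$ is tracked through the calibration above. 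The extension to a general GLM (Appendix C) is identical once $b''$ is bounded below on the relevant range, which is exactly the content of Assumption (A).
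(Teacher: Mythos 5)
Your skeleton matches the paper's proof of Theorem \ref{th:slopeglm} (Appendix C, of which Theorem \ref{th:slope} is the logistic special case): the basic inequality obtained from the minimizer property plus the identity $n\,KL(\btheta,\bthetas)=\ell(\bbeta)-\ell(\bbetas)+\frac{1}{a}(\bY-b'(\btheta))^t(\bthetas-\btheta)$ (equation (29) of Abramovich and Grinshtein, 2016, i.e.\ your Bregman decomposition), the head-minus-tail bound $H(\bbeta)-H(\bbetas)\leq \sqrt{\sum_{j\leq d_0}\lambda_j^2}\,||\bu||_2-\sum_{j>d_0}\lambda_j|u|_{(j)}$ from Lemma A.1 of Bellec, Lecu\'e and Tsybakov (2018), cone membership $\bu\in\cS(d_0,c_0)$ forced by nonnegativity of the Kullback--Leibler divergence, the $WRE(d_0,c_0)$ condition, and absorption of $||\bthetas-\btheta||_2^2$ into $KL$ by Young's inequality. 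The genuine gap is in your treatment of the stochastic term and, consequently, of the passage to expectation. You bound $\langle\bxi,\bu\rangle$ only by the sorted-weighted term $\sum_j|u|_{(j)}\sqrt{\ln(2d/j)}$ on a single high-probability event. The paper instead invokes Theorem 9.1 of Bellec, Lecu\'e and Tsybakov (2018), which bounds the empirical process at \emph{every} confidence level $\Delta$ by the \emph{maximum} of that term and $||\bthetas-\btheta||_2\left(\sqrt{\pi/2}+\sqrt{2\ln \Delta^{-1}}\right)$; this second branch drives the paper's Case 1 and yields the $\Delta$-uniform deviation inequality (\ref{eq:klslopeglm}) with an $8\ln\Delta^{-1}$ tail, which is then integrated to produce the additive $\frac{2\pi+8}{\ln(2d)}$ term in (\ref{eq:klslope}). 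Your plan to ``integrate the sub-Gaussian tail of the stochastic event'' does not close as stated: on the complement of your good event $KL(\bp,\widehat{\bp}_{Slope})$ is simply uncontrolled (it is unbounded), so there is nothing to integrate against. Without the $G(\bu)$ branch you get at best a high-probability bound, not the claimed bound in expectation.

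On the curvature step you take a genuinely different route from the paper, and here the comparison cuts the other way. You propose localization along the segment plus self-concordance of the logistic $b$ to establish $D(\bbetas,\bbeta)\geq\frac{1}{2}\delta(1-\delta)||X\bu||_2^2$, precisely because $\bbetas^t\bx_i$ is not a priori bounded. The paper does none of this: it directly cites the two-sided equivalence $\frac{\cl}{2a}||\bthetas-\btheta||_2^2 \leq n\,KL(\btheta,\bthetas) \leq \frac{\cu}{2a}||\bthetas-\btheta||_2^2$ (Lemma 1 of Abramovich and Grinshtein, 2016, displayed as (\ref{eq:normeq})) with $\cl=\delta(1-\delta)$, and applies the lower-bound direction to $\bthetas$ without any localization. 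Your concern is substantive --- for logistic regression $b''(t)=e^t(1+e^t)^{-2}$ decays at infinity, so the lower bound needs the estimator's natural parameters kept in a bounded range --- and your self-concordance argument is a more self-contained way to justify exactly the inequality the paper imports from the cited lemma, at the cost of extra work. Finally, a minor bookkeeping point: the paper proves the GLM version with $A\geq 40\sqrt{6}$ and weights carrying $\sqrt{\cu/a}$; the constant $A\geq 20\sqrt{6}$ in Theorem \ref{th:slope} arises from specializing $\sqrt{\cu}=1/2$, $a=1$ for the Bernoulli case, which your constant-tracking should reproduce if you repair the stochastic step.
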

Note that $\lambda_j$'s in (\ref{eq:lambda}) are of the same form as those in Bellec, Lecu\'e
and Tsybakov (2018) for Gaussian regression but differ in a constant $A$. 

Theorem \ref{th:slope} is a particular case of 
Theorem \ref{th:slopeglm} for a general GLM (see Appendix C).

Using (\ref{eq:bartlett}) one immediately gets the corresponding result for the
misclassification exceess risk of the logistic Slope classifier:
\begin{corollary} \label{cor:slope}
Assume all the conditions of Theorem \ref{th:slope} and choose 
$\lambda_j$ according to (\ref{eq:lambda}). Consider the logistic Slope classifier 
$\widehat{\eta}_{Slope}(\bx)=I\{\widehat{\bbeta}^t_{Slope} \bx \geq 0\}$.
Then,
\be \label{eq:slopeclass}
\cE_X(\widehat{\eta}_{Slope},\eta^*) = O\left(\sqrt{\frac{d_0
\ln \frac{de}{d_0}}{n}}\right)
\ee
\end{corollary}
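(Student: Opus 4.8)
The plan is to obtain the bound as a direct consequence of the surrogate-loss comparison inequality (\ref{eq:bartlett}) together with the Kullback--Leibler risk bound of Theorem \ref{th:slope}. First I would record that the logistic Slope classifier is itself a plug-in classifier: since the logistic link is strictly increasing and maps the linear predictor $0$ to probability $1/2$, one has $\widehat{\eta}_{Slope}(\bx_i)=I\{\widehat{\bbeta}_{Slope}^t \bx_i \geq 0\}=I\{\widehat{p}_{Slope,i}\geq 1/2\}$. The comparison inequality of Zhang (2004) and Bartlett, Jordan and McAuliffe (2006) underlying (\ref{eq:bartlett}) is a property of an arbitrary estimated probability vector, so it applies verbatim with $\widehat{\bp}_{Slope}$ in place of $\widehat{\bp}_{\widehat M}$, giving $\cE_X(\widehat{\eta}_{Slope},\eta^*)\leq \sqrt{2\,\mE KL(\bp,\widehat{\bp}_{Slope})}$.

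Next I would invoke Theorem \ref{th:slope}, whose hypotheses are assumed here, to control the right-hand side. Under Assumption (A) and the $WRE(d_0,c_0)$ condition, (\ref{eq:klslope}) bounds $\sup_{||\bbeta||_0\leq d_0}\mE KL(\bp,\widehat{\bp}_{Slope})$ by a constant multiple of $\frac{d_0}{n}\ln(2de/d_0)$, where the constant collects the fixed quantities $A$, $c_0$, $\delta$ and $\kappa(d_0,c_0)$ (note that the $(2\pi+8)/\ln(2d)$ term is bounded and that $\kappa(d_0,c_0)>0$ by the WRE condition). Since $\ln(2de/d_0)\asymp \ln(de/d_0)$, taking square roots yields $\cE_X(\widehat{\eta}_{Slope},\eta^*)=O\big(\sqrt{(d_0/n)\ln(de/d_0)}\big)$, as claimed.

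There is essentially no obstacle here: the only point requiring a word of justification is that (\ref{eq:bartlett}) is a property of any plug-in rule, not of the specific penalized-likelihood selector $\widehat{M}$, which is exactly the content of the cited general results. Everything else is the substitution of the rate from Theorem \ref{th:slope} and the absorption of the design- and model-dependent factor $\frac{1}{\delta(1-\delta)}\big((2\pi+8)/\ln(2d)+\kappa^{-2}(d_0,c_0)\big)$ into the implied $O(\cdot)$ constant, which is legitimate because these quantities are held fixed throughout.
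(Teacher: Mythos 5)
Your proposal is correct and matches the paper's own proof exactly: the paper likewise obtains Corollary \ref{cor:slope} by applying the comparison inequality (\ref{eq:bartlett}) of Zhang (2004) and Bartlett, Jordan and McAuliffe (2006) with $\widehat{\bp}_{Slope}$ in place of $\widehat{\bp}_{\widehat M}$ and then substituting the Kullback--Leibler risk bound (\ref{eq:klslope}) of Theorem \ref{th:slope}, absorbing the fixed factors involving $\delta$, $c_0$, $A$ and $\kappa(d_0,c_0)$ into the $O(\cdot)$ constant. Your added remark that (\ref{eq:bartlett}) holds for any plug-in rule, not just the penalized-likelihood selector, is precisely the (unstated) justification the paper relies on when it says the result follows ``immediately.''
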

Thus, the logistic Slope estimator is computationally feasible and yet achieves the optimal
rates under the additional $WRE(d_0,c_0)$ condition on the design for all but
very dense models for which $d_0\ln(\frac{de}{d_0})>r$ (see Theorem  \ref{th:01upper}). 

Furthermore, following the arguments in the proof of Theorem \ref{th:upperln}, one can show that the bound (\ref{eq:slopeclass}) for $\cE_X(\widehat{\eta}_{Slope},\eta^*)$  may be reduced under
the additional low noise Assumption (B).

\ignore{
\begin{corollary} \label{cor:slopeln}
Assume all the conditions of Theorem \ref{th:slope} and, in addition,
the low noise Assumption (B). Then, for the logistic Slope classifier 
$\widehat{\eta}_{Slope}(\bx)$ with $\lambda_j$ in (\ref{eq:lambda}) we have
$$
\cE_X(\widehat{\eta}_{Slope},\eta^*) = O\left(\min\left\{\sqrt{\frac{d_0
\ln \frac{de}{d_0}}{n}}~,~\frac{d_0
\ln \frac{de}{d_0}}{nh}\right\}\right)
$$
\end{corollary}
}

\section{Random design} \label{sec:random}
The results above have been obtained for the fixed design. 
In machine learning, it is more common to consider classification with random design.
In this section we show that our main previous results for the fixed design can be extended for the random design.

Consider the following model:
\be \label{eq:model_random}
Y|(\bX=\bx) \sim B(1,p(\bx)), \;\;\;p(\bx)=E(Y|\bX=\bx)=\frac{\exp(\bbeta^t \bx)}{1+\exp(\bbeta^t \bx)}\;\;{\rm and}\;\;\bX \sim q(\bx),
\ee
where $q(\cdot)$ is a marginal density of $\bX$ with
a bounded support $\cX \subset \mathbb{R}^d$.
By re-scaling we can assume without loss of generality that $||\bx||_2 \leq 1$ for all $\bx \in \cX$, where recall that $||\cdot||_2$ is the Euclidean norm.

We assume that all $X_j$ are linearly independent. 
Hence, the minimal eigenvalue $\lambda_{\min}(G)$ of the matrix $G=E(\bX \bX^t)$ is strictly positive.
%We assume that its minimal eigenvalue $\lambda_{min}(G)$ is bounded
%away from zero,
%i.e. $\lambda_{\min}(G) \geq c_0$ for some positive $c_0$ not %depending on $d$.

Recall that the misclassification excess risk of a classifier $\widehat{\eta}$ designed from a random
sample $(\bX_1,Y_1),\ldots,(\bX_n,Y_n)$ from the joint distribution
$(\bX,Y)$ is $\cE(\widehat{\eta}_{\widehat M},\eta^*)=
ER(\widehat{\eta})-ER(\eta^*)$, where the Bayes classifier 
$\eta^*(\bx)=I(\bbeta^t \bx \geq 0)$. Adapting a general result on the minimax lower bound for the misclassification excess risk for random design (see, e.g., Devroye, Gy\"orfi and Lugosi, 1996, Chapter 14 and Boucheron, Bousquet and Lugosi, 2005, Section 5.5) for a $d_0$-sparse agnostic logistic regression model (\ref{eq:model_random}), by Lemma \ref{lem:vc} we have:
$$
\inf_{\tilde  \eta} \sup_{\eta^* \in \cC(d_0), q} \cE(\tilde{\eta},\eta^*) \geq C \sqrt{\frac{V(\cC(d_0))}{n}}
\geq
\tilde{C} ~\sqrt{\frac{d_0 \ln \frac{de}{d_0}}{n}}
$$

Similar to the fixed design setup, consider the penalized maximum likelihood model selection procedure (\ref{eq:binpen}) with the
complexity penalty 
\be \label{eq:penalty_random}
Pen(|M|)=C|M| \ln\frac{de}{|M|},\;\;\;|M|=1,\ldots,\min(d,n),
\ee 
where the exact choice for the constant $C$ will follow from the 
proof of Theorem \ref{th:01upper_randoml} below.

The following Assumption (A1) is a direct analog of Assumption (A) for random design: 
\begin{assumption}[\bf A1] \label{as:A1}
Assume that there exists $0 < \delta < 1/2$ such that $\delta < p(\bx) < 1-\delta$  
or, equivalently, there exists $C_0>0$ such that $|\bbeta^t \bx| < C_0$ in (\ref{eq:model_random}) for all $\bx \in \cX$.
\end{assumption}

The following Theorem \ref{th:01upper_random} extends the results of
Theorem \ref{th:01upper} for random design:
\begin{theorem} \label{th:01upper_random}
Consider a sparse logistic regression model (\ref{eq:model_random}), where $||\bbeta||_0 \leq d_0$. 
%Assume, in addition,  that $q$ is bounded
%away from zero, i.e. $q(\bx) \geq \underline{q} >0$ for all $\bx %\in S$.

Let ${\widehat M}$ be a model selected in (\ref{eq:binpen}) with the complexity penalty (\ref{eq:penalty_random}) and consider the corresponding plug-in
classifier $\widehat{\eta}_{\widehat M}(\bx)$ in (\ref{eq:classifier}). Under Assumption (A1), 
$$
\sup_{\eta^* \in \cC(d_0)}\cE(\widehat{\eta}_{\widehat M},\eta^*) \leq C \sqrt{\ln\left(\frac{1}{\delta \lambda_{\min}(G))}\right)~ \frac{d_0
\ln \frac{de}{d_0}}{n}}
$$
for some positive $C>0$ , simultaneously for all $1 \leq d_0 \leq \min(d,n)$.
\end{theorem}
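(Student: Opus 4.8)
The plan is to bypass the Kullback--Leibler calibration (\ref{eq:bartlett}), which is responsible for the factor $\frac{1}{\delta(1-\delta)}$ in Theorem \ref{th:01upper}, and instead compare the misclassification excess risk directly with the \emph{population prediction error} $||\widehat\bbeta_{\widehat M}-\bbeta||_G^2 := (\widehat\bbeta_{\widehat M}-\bbeta)^t G(\widehat\bbeta_{\widehat M}-\bbeta) = \mE_{\bX}\big((\widehat\bbeta_{\widehat M}-\bbeta)^t\bX\big)^2$, where $G=\mE(\bX\bX^t)$. First I would establish the comparison inequality. For fixed data one has the standard identity $R(\widehat\eta_{\widehat M})-R(\eta^*)=\mE_{\bX}\big[|2p(\bX)-1|\,I\{\widehat\eta_{\widehat M}(\bX)\neq\eta^*(\bX)\}\big]$. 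Since $|2p(\bx)-1|=|\tanh(\bbeta^t\bx/2)|\le\tfrac12|\bbeta^t\bx|$ and, on the disagreement event, $\bbeta^t\bX$ and $\widehat\bbeta_{\widehat M}^t\bX$ have opposite signs so that $|\bbeta^t\bX|\le|(\widehat\bbeta_{\widehat M}-\bbeta)^t\bX|$, Cauchy--Schwarz and Jensen yield
\[
\cE(\widehat\eta_{\widehat M},\eta^*)\le \tfrac12\,\mE_{\bX}|(\widehat\bbeta_{\widehat M}-\bbeta)^t\bX|\le \tfrac12\sqrt{\mE\,||\widehat\bbeta_{\widehat M}-\bbeta||_G^2}.
\]
This step uses no lower bound on the variances $p(1-p)$, which is precisely why the $\delta$-dependence in the final bound can be only logarithmic; it is the ``other loss'' alluded to after Theorem \ref{th:01upper}, the squared prediction error replacing the Kullback--Leibler risk in (\ref{eq:bartlett}).

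It then remains to prove an oracle bound $\mE\,||\widehat\bbeta_{\widehat M}-\bbeta||_G^2 \le C\,\ln\big(\tfrac{1}{\delta\lambda_{\min}(G)}\big)\frac{d_0\ln(de/d_0)}{n}$ for the penalized-MLE estimator of (\ref{eq:binpen}) with the penalty (\ref{eq:penalty_random}). I would work through the population negative log-likelihood $\mathcal{L}(\bbeta)=\mE_{\bX}[\ln(1+e^{\bbeta^t\bX})-p(\bX)\bbeta^t\bX]$, whose minimizer is the true $\bbeta$ (its excess equals the population divergence $\mE_{\bX}KL(p(\bX),p_{\widehat\bbeta}(\bX))\ge 0$). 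The basic inequality from the definition of $\widehat M$, together with a concentration bound for the empirical process (the fluctuation of empirical minus population log-likelihood) controlled uniformly over models, gives an upper bound on this excess; a peeling argument over the model size $|M|$ makes the penalty $\propto|M|\ln(de/|M|)$ offset the $\ln\binom{d}{|M|}$ cardinality of models of each size, and a curvature lower bound for $\mathcal{L}$ then converts the excess log-likelihood back into $||\widehat\bbeta_{\widehat M}-\bbeta||_G^2$.

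Two ingredients produce the stated constant. Under Assumption (A1) the linear predictors satisfy $|\bbeta^t\bx|<C_0$ with $C_0\asymp\ln(1/\delta)$, and since $\lambda_{\min}(G)\,||\bbeta||_2^2\le\bbeta^t G\bbeta=\mE_{\bX}(\bbeta^t\bX)^2\le C_0^2$, the relevant parameters lie in a ball whose radius is controlled by $C_0$ and $\lambda_{\min}(G)$. The conversion between the empirical prediction seminorm $\frac1n\sum_i(\bu^t\bX_i)^2$, in which the estimator is naturally controlled, and the population seminorm $||\bu||_G^2$ is where $\lambda_{\min}(G)$ enters, through a restricted matrix-concentration argument using $||\bx||_2\le1$ and $|M|\le\min(d,n)$; the $\delta$-dependence is won in the curvature step. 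These effects combine into the factor $\ln\big(1/(\delta\lambda_{\min}(G))\big)$.

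I expect the curvature step to be the main obstacle. The crude lower bound replaces the Hessian of $\mathcal{L}$ by its smallest eigenvalue, of order $\inf p(1-p)\ge\delta(1-\delta)$, which would reinstate a $1/\delta$ factor and destroy the near-optimality established by the lower bound preceding the theorem. Reducing this to a merely logarithmic dependence requires exploiting the self-concordance of the logistic loss, so that the effective curvature over the bounded prediction range degrades only logarithmically in its radius $\asymp C_0\asymp\ln(1/\delta)$; dovetailing this localized curvature bound with the uniform empirical-process control and the empirical-to-population seminorm comparison is the delicate part of the argument.
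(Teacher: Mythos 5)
Your comparison step is correct, and it is genuinely $\delta$-free: on the disagreement event the signs of $\bbeta^t\bX$ and $\widehat\bbeta_{\widehat M}^t\bX$ differ, so indeed $|2p(\bX)-1|\leq \tfrac12|\bbeta^t\bX|\leq\tfrac12|(\widehat\bbeta_{\widehat M}-\bbeta)^t\bX|$ and $\cE(\widehat{\eta}_{\widehat M},\eta^*)\leq\tfrac12\sqrt{\mE\,(\widehat\bbeta_{\widehat M}-\bbeta)^tG(\widehat\bbeta_{\widehat M}-\bbeta)}$. The gap is in the intermediate oracle bound this forces you to prove: $\mE\,\|\widehat\bbeta_{\widehat M}-\bbeta\|_G^2\leq C\ln\bigl(\tfrac{1}{\delta\lambda_{\min}(G)}\bigr)\tfrac{d_0\ln(de/d_0)}{n}$ is \emph{false} as a worst-case statement under Assumption (A1). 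When the truth saturates the assumption (all $p(\bx)$ near $\delta$), the Fisher information for $\bbeta$ is $\mE\bigl(p(\bX)(1-p(\bX))\bX\bX^t\bigr)\asymp\delta\,G$, so the minimax risk in the $G$-norm is of order at least $d_0/(n\delta)$ --- polynomial, not logarithmic, in $1/\delta$. No self-concordance argument can beat this information bound; your hope that the effective curvature ``degrades only logarithmically'' in the radius is backwards: logistic self-concordance gives curvature decaying like $e^{-R}$ over a predictor range of radius $R$, and with $R\asymp C_0\asymp\ln(1/\delta)$ this is exactly the factor $\delta$, reinstating $1/\delta$. The step you yourself flagged as the ``main obstacle'' is thus not merely delicate but insurmountable in the form you need. (A secondary hole: the penalized MLE's linear predictor is not constrained to $[-C_0,C_0]$ --- under data separation $\widehat\bbeta_M$ can diverge --- so even the localization your empirical-process/peeling scheme presupposes requires a separate argument.)

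The theorem is nevertheless true, and the paper's proof (as the $\alpha=0$ case of Theorem \ref{th:01upper_randoml}, Appendix D) shows what your decomposition misses: one must stay on the \emph{probability scale}, where both conversions are $\delta$-free. There, Bartlett--Jordan--McAuliffe with $\psi(g)=g^2/2$ gives $\cE(\widehat{\eta}_{\widehat M},\eta^*)\leq C\bigl(\mE\|\widehat p_{\widehat M}-p\|^2_{L_2(q)}\bigr)^{1/2}$; the squared $L_2(q)$ distance is dominated by the Hellinger distance $d^2_H(f_{\widehat\bbeta_{\widehat M}},f_{\bbeta})$ with an absolute constant; and the Hellinger risk of the penalized MLE is bounded via Yang and Barron (1998), where $\delta$ and $\lambda_{\min}(G)$ enter only through the metric entropy of Hellinger balls in the sup-distance $\rho$ (via $\rho(f_{\bbeta_1},f_{\bbeta_2})\leq\delta^{-1}\|p_2-p_1\|_\infty$ and (\ref{eq:l2norm})), hence only inside the logarithm of the penalty --- that is the true source of the $\ln\bigl(\tfrac{1}{\delta\lambda_{\min}(G)}\bigr)$ factor. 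Your comparison $|2p-1|\leq\tfrac12|\bbeta^t\bx|$ is loose precisely in the saturated region where the $G$-norm risk blows up, whereas the $p$-scale comparison $|2p-1|\leq 2|\widehat p-p|$ on the disagreement set stays tight there; once you take worst cases in your two steps separately, that slack cannot be recovered.
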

Thus, the classifier $\widehat{\eta}_{\widehat M}(\bx)$ is adaptively
rate-optimal (in the minimax sense) for random design as well.

Theorem \ref{th:01upper_random} is a particular case of
Theorem \ref{th:01upper_randoml} stated below.
%See the proof for more details on $C(\delta,\lambda_{\min}(G))$.

We should note that similar upper bounds can be obtained for  
model selection by penalized ERM utilizing general results of 
Devroye, Gy\"orfi and Lugosi (1996, Chapter 18)
and Vapnik (2000, Chapter 4) on structural penalties depending on a 
VC-dimension and applying Lemma \ref{lem:vc} for their adaptation to sparse logistic regression. 
%The resulting upper bounds for the misclassification excess risk are %similar to those in (\ref{eq:01upper}) for the fixed design (up to an %extra $\ln n$-factor).
See also Boucheron, Bousquet and Lugosi, (2005, Section 8) for related ERM approaches and references therein.
Recall, however, that a computational cost is a crucial drawback of any
ERM-based procedure.

The misclassification excess risk
of $\widehat{\eta}_{\widehat M}(\bx)$ can be again improved under the low-noise condition which can even be formulated in a more general form for random design (Mammen and Tsybakov, 1999 and Tsybakov, 2004):
\begin{assumption}[\bf B1] \label{as:B1}
Assume that there exist $C>0$ and $\alpha \geq 0$  such that
\be \label{eq:low_noise.random}
P\left(|p(\bX)-1/2|\leq h\right) \leq C h^{\alpha}
\ee
%for all $0 < h < h*$.  
for all $0 < h < h^*$, where $h^* < 1/2$.  
\end{assumption}
Assumption (B) in
Section \ref{sec:lownoise}
for the fixed design can be viewed as a limiting case $\alpha=\infty$.

\begin{theorem} \label{th:01upper_randoml}
Consider a sparse logistic regression model (\ref{eq:model_random}),
where $||\bbeta||_0 \leq d_0$.
Let ${\widehat M}$ be a model selected in (\ref{eq:binpen}) with the complexity penalty (\ref{eq:penalty_random}) and consider the corresponding plug-in
classifier $\widehat{\eta}_{\widehat M}(\bx)$ in (\ref{eq:classifier}).

Under Assumptions (A1) and (B1), there exists $C>0$ such that
\be \label{eq:01upper_randoml}
\sup_{\eta^* \in \cC(d_0)}\cE(\widehat{\eta}_{\widehat M},\eta^*)
\leq C \left(\ln\left(\frac{1}{\delta \lambda_{\min}(G))}\right)~\frac{d_0
\ln \frac{de}{d_0}}{n}\right)^{\frac{\alpha+1}{\alpha+2}}
\ee
for all $1 \leq d_0 \leq \min(d,n)$.
\end{theorem}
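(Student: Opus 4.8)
The plan is to combine a population-level quadratic (equivalently Kullback--Leibler) risk bound for the plug-in probability estimates with the Tsybakov-type margin condition (B1), in the interpolation scheme of Mammen and Tsybakov (1999) and Tsybakov (2004). Throughout write $r_n=\ln\!\big(\tfrac{1}{\delta\lambda_{\min}(G)}\big)\,\tfrac{d_0}{n}\ln\tfrac{de}{d_0}$ for the quantity inside the parentheses in (\ref{eq:01upper_randoml}), and recall the identity $\cE(\widehat\eta_{\widehat M},\eta^*)=\mE\big[\,|2p(\bX)-1|\,I\{\widehat\eta_{\widehat M}(\bX)\neq\eta^*(\bX)\}\,\big]$, the expectation being over both the training sample and an independent test point $\bX\sim q$. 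If $r_n$ is of constant order the bound is trivial, since $\cE\le 1$ always, so I may assume $r_n$ is small.

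The first and main step is the population quadratic bound $\mE\mE_{\bX}\big[(\widehat p_{\widehat M}(\bX)-p(\bX))^2\big]\le C\,r_n$, and this is where the random design genuinely differs from the fixed-design analysis. The penalized-likelihood argument behind (\ref{eq:klupper}) controls the \emph{empirical} Kullback--Leibler risk on the training points; under Assumption (A1), and on the high-probability event that the fitted linear predictors stay bounded, the logistic link is bi-Lipschitz on the relevant range, so this empirical risk is equivalent up to constants to the empirical quadratic form $(\widehat{\bbeta}_{\widehat M}-\bbeta)^t\widehat G(\widehat{\bbeta}_{\widehat M}-\bbeta)$ with $\widehat G=\tfrac1n\sum_i\bx_i\bx_i^t$, while the target population risk is comparable to $(\widehat{\bbeta}_{\widehat M}-\bbeta)^tG(\widehat{\bbeta}_{\widehat M}-\bbeta)$. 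I would therefore compare these two quadratic forms on the low-dimensional set of vectors supported on $\widehat M\cup\operatorname{supp}(\bbeta)$ by a restricted matrix-concentration argument, using $\lambda_{\min}(G)>0$ to lower-bound the population second moment; the factor $\ln(1/(\delta\lambda_{\min}(G)))$ enters through the union bound over admissible supports together with a truncation of the risk on the atypical event where the empirical and population Gram matrices fail to be comparable. I expect this empirical-to-population transfer, and the extraction of the precise $\lambda_{\min}(G)$ dependence, to be the principal obstacle; by contrast the remaining interpolation is routine.

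Given the quadratic bound, the margin interpolation proceeds by splitting the excess risk at a threshold $t>0$ according to whether $|2p(\bX)-1|\le t$. On the small-margin part I bound the integrand by $t$ and drop the disagreement indicator, so that Assumption (B1) gives $\mE\big[|2p(\bX)-1|\,I\{|2p(\bX)-1|\le t\}\big]\le t\,P\big(|p(\bX)-1/2|\le t/2\big)\le C\,t^{\alpha+1}$. On the large-margin part, a disagreement $\widehat\eta_{\widehat M}(\bX)\neq\eta^*(\bX)$ forces $\widehat p_{\widehat M}(\bX)$ and $p(\bX)$ onto opposite sides of $1/2$, whence $|2p(\bX)-1|\le 2\,|\widehat p_{\widehat M}(\bX)-p(\bX)|$; combining this with $|2p(\bX)-1|>t$ yields the pointwise bound $|2p(\bX)-1|\le |2p(\bX)-1|^2/t\le \tfrac{4}{t}(\widehat p_{\widehat M}(\bX)-p(\bX))^2$, so this part is at most $\tfrac{4}{t}\,\mE\mE_{\bX}[(\widehat p_{\widehat M}(\bX)-p(\bX))^2]\le 4C r_n/t$. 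Hence $\cE(\widehat\eta_{\widehat M},\eta^*)\le C\,t^{\alpha+1}+4C r_n/t$, and choosing $t\asymp r_n^{1/(\alpha+2)}$ balances the two terms at the common order $r_n^{(\alpha+1)/(\alpha+2)}$, which is precisely (\ref{eq:01upper_randoml}). Taking $\alpha=0$ recovers Theorem \ref{th:01upper_random}.
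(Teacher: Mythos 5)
Your second step (the margin interpolation) is correct and is in substance what the paper does in one stroke: it applies Theorem 3 of Bartlett, Jordan and McAuliffe (2006) with $\psi(g)=g^2/2$ to obtain $\cE(\widehat{\eta}_{\widehat M},\eta^*) \leq C \left(\mE\|\widehat{p}_{\widehat M}-p\|^2_{L_2(q)}\right)^{\frac{\alpha+1}{\alpha+2}}$, i.e.\ (\ref{eq:L2q}); your explicit split at the threshold $t\asymp r_n^{1/(\alpha+2)}$ is an unpacked proof of that inequality. The genuine gap is in what you yourself flag as the principal obstacle: the population bound $\mE\|\widehat{p}_{\widehat M}-p\|^2_{L_2(q)}\leq C r_n$ is asserted via an empirical-to-population Gram transfer that is never carried out, and as sketched it cannot deliver the stated dependence on $\lambda_{\min}(G)$. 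To pass from the empirical quadratic form $(\widehat{\bbeta}_{\widehat M}-\bbeta)^t\widehat{G}(\widehat{\bbeta}_{\widehat M}-\bbeta)$ to the population one, you need $\widehat{G}_S\succeq c\,G_S$ uniformly over supports $S$ with $|S|=O(d_0)$; with $\|\bx\|_2\leq 1$, matrix concentration plus the union bound over supports gives operator-norm deviations of order $\sqrt{d_0\ln(de/d_0)/n}$, so relative comparability requires roughly $n\gtrsim d_0\ln(de/d_0)/\lambda^2_{\min}(G)$ --- a sample-size condition the theorem does not impose, and one that is polynomial rather than logarithmic in $1/\lambda_{\min}(G)$. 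When it fails, the ``atypical event'' is not rare, and truncating the (bounded) risk on it cannot recover a bound of order $r_n^{(\alpha+1)/(\alpha+2)}$.

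The paper avoids Gram-matrix transfer altogether. It lower-bounds the squared Hellinger distance, $d^2_H(f_{\bbeta_1},f_{\bbeta_2})\geq \frac{1}{2}\|p_1-p_2\|^2_{L_2(q)}$, and then applies Theorem 1 of Yang and Barron (1998) to the penalized MLE (\ref{eq:binpen}): their metric-entropy assumption is verified by covering a Hellinger ball in $\cb_M$ with balls in the sup-norm distance $\rho(f_{\bbeta_1},f_{\bbeta_2})=\|\ln f_{\bbeta_2}-\ln f_{\bbeta_1}\|_\infty$, and the norm comparisons (\ref{eq:equivnorm})--(\ref{eq:inftynorm}) show the covering number of a radius-$r$ ball is at most $(A_M r/\epsilon)^{|M|}$ with $A_M\asymp 1/\left(\delta^2(1-\delta)\sqrt{\lambda_{\min}(G)}\right)$. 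Thus $\lambda_{\min}(G)$ enters only through the term $m_M\ln A_M$ that the penalty must dominate, which is why the constant $C$ in (\ref{eq:penalty_random}) must be chosen large enough (of order $\ln(1/(\delta\lambda_{\min}(G)))$) and why the final bound carries only a logarithmic factor in $1/\lambda_{\min}(G)$. Your sketch makes no provision for this calibration of the penalty to the random design --- in the paper's argument that is precisely where the $\ln(1/(\delta\lambda_{\min}(G)))$ factor originates --- so the first step of your plan would have to be replaced by an entropy-based argument of the Yang--Barron type (or supplemented with additional design/sample-size assumptions) to prove the theorem as stated.
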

Theorem \ref{th:01upper_random} (no low-noise assumption) corresponds to the extreme case
$\alpha=0$. For another extreme case
$\alpha=\infty$ (complete separation from 1/2),
the upper bound (\ref{eq:01upper_randoml}) is 
$O\left(\frac{d_0
\ln \frac{de}{d_0}}{n}\right)$ similar to the results of Section \ref{sec:lownoise}  for the fixed design.

The rates 
(\ref{eq:01upper_randoml}) can be reduced further under additional
conditions on the support $\cX$ and the density $q(\bx)$ using the arguments of Koltchinskii and Beznosova (2005) and Audibert and Tsybakov (2007) but this is beyond the scope of the paper. 

To extend the results of Theorem \ref{th:slope} for Slope estimator
for random design one needs the $WRE(d_0,c_0)$ condition to be held with high probability. It evidently depends on the marginal distribution $q(\bx)$. Thus, Bellec, Lecu\'e and Tsybakov (2018, Theorem 8.3) showed that it is satisfied for multivariate Gaussian and even sub-Gaussian
$\bX$ when $d_0\ln^2(de/d_0) \leq c n$ for some constant $c>0$, and under
mild conditions on the covariance matrix.

\section{Numerical example} \label{sec:example}
We now demonstrate the performance of the proposed feature selection and classification procedures on a short numerical real-data study.

The online marketing data contains various information about 500 customers registered at the site:  
their personal data (e.g., gender, country of living, etc.) and purchase activities during the past period (e.g., frequencies and types of purchases, purchase amounts, currencies, etc.). Overall, there were 61 explanatory variables. Given such data, one of the main goals is to predict customers who are about to become inactive in order to incentivize them to remain by various discounts.
The output then is a binary variable indicating
whether a customer was still active during the next time period or not.

The data was randomly split  into training ($n_1=400$) and
test ($n_2=100$) sets. The number of possible features $d=61$ was too large to perform a complete combinatorial search for penalized
maximum likelihood model selection procedure (\ref{eq:binpen}) with the
complexity penalty (\ref{eq:penalty}). Instead we used its forward
selection version, logistic Lasso and logistic
Slope classifiers. The corresponding tuning constants were chosen by 5-fold cross-validation and the resulting three classifiers were then applied to the test set. 

The best misclassification rate was achieved by Slope (17\%), followed by Lasso (19\%) and forward selection (22\%). 
In addition, we compared the sizes of the models selected by the
three classifiers. The conservative forward selection procedure yielded a very sparse model with only 2 predictors -- the time since last purchase and purchase amount.
On the other hand, Lasso with the CV-chosen tuning parameter is known to tend to select too many variables (see, e.g., 
B\"uhlmann and van de Geer, 2011, Section 2) and resulted in the model of size 13 by adding 11 other variables.  
The Slope classifier with a decreasing sequence of tuning parameters commonly implies even larger models (32 in the considered example). 
Note, however, that prediction and model identification are two different problems and, in particular, the choices of tuning parameters for them should be different.

\section*{Acknowledgments} 
The work was supported by the Israel Science Foundation (ISF), grants ISF-820/13 and ISF-589/18.
The authors would like to thank Noga Alon for his help in the proof of Lemma 1, Alexander Tsybakov for valuable remarks and Roi Granot
for the real-data example.

\section*{Appendix} \label{sec:appendix}
Throughout the proofs we use various generic positive constants, not necessarily
the same each time they are used even within a single equation.

\subsection*{Appendix A: Proof of Lemma \ref{lem:vc}}
%\begin{proof}
Denote for brevity $V=V(\cC(d_0))$.
For any fixed subset of $d_0$ $\beta_j$'s the VC of the corresponding set of
$d_0$-dimensional linear classifiers is known to be $d_0$ (e.g., Giraud, 2015,
Exercise 9.5.2). Then, by Sauer's lemma the maximal number of different labelling
of $V$ points in $\mathbb{R}^{d_0}$ that such set of classifiers can produce is  $\sum_{k=0}^{d_0} \binom{V}{k} \leq \left(\frac{Ve}{d_0}\right)^{d_0}$ (see, e.g., Giraud, 2015, Section 9.2.2). The overall number of different labelling is,
therefore, $\binom{d}{d_0} \sum_{k=0}^{d_0}
\binom{V}{k}$, and by the definition of $V(\cC(d_0))$ we have  
$$
2^V  \leq \binom{d}{d_0}~ \sum_{k=0}^{d_0}
\binom{V}{k} \leq \left(\frac{de}{d_0}\right)^{d_0} \left(\frac{Ve}{d_0}\right)^{d_0} \leq \left(\frac{de}{d_0}\right)^{2d_0}
$$
that implies an upper bound $V \leq 2~ d_0 \log_2\left(\frac{de}{d_0}\right)$.

On the other hand, take $k=\log_2(2d/d_0)$ and let $K$ be the $k \times 2^{k-1}$
matrix whose columns are all possible vectors with entries $\pm 1$ and the
first entry $1$. Note that $d_0 2^{k-1}=d$. Let $W$ be the $d_0k \times d$ 
block-wise matrix consisting of $d_0 \times d_0$ blocks, each being a $k \times
2^{k-1}$ matrix, where the diagonal matrices are copies of $K$, while all others
are zero matrices.  Thus, $W$ has $d_0 k=d_0\log_2(2d/d_0)$ rows. It is
easy to verify that these rows are shattered by  
half-spaces whose supporting
vectors $w$ have a single non-zero $\pm 1$ entry in each of the $d_0$ blocks and,
therefore, $V \geq d_0\log_2(2d/d_0)$.
%\end{proof}

\subsection*{Appendix B: Tighter bounds for low-noise condition}

\subsubsection*{B1: Proof of Theorem \ref{th:upperln}}
%\begin{proof}
Assumption (\ref{eq:hcond}) obviously implies Assumption (A) with $\delta=1/2-\Delta$. In addition, under (\ref{eq:hcond}), $Var(Y_i)=p_i(1-p_i)
\leq (1/2-h)(1/2+h)=(1-4h^2)/4$. Hence, adapting the results of 
Abramovich and Grinshtein (2016) on Kullback-Leibler risk in general GLM
framework for logistic regression, the upper bound (\ref{eq:klupper}) 
for $EKL({\bp},\widehat{\bp}_{\widehat M})$ can be 
improved:
\be \label{eq:klupperh}
\sup_{\bbeta: ||\bbeta||_0 \leq d_0} EKL({\bp},\widehat{\bp}_{\widehat M}) \leq C~ \frac{1-4h^2}{1-4\Delta^2}~ \frac{\min\left(d_0
\ln \frac{de}{d_0}, r\right)}{n}
\ee
and, therefore, from (\ref{eq:bartlett}) we have
$$
\sup_{\eta^* \in \cC_X(d_0,h)}\cE_X(\widehat{\eta}_{\widehat M},\eta^*) \leq C_1 ~\sqrt{\frac{1-4h^2}{1-4\Delta^2}~\frac{\min\left(d_0\ln \frac{de}{d_0}, r\right)}{n}}
$$

On the other hand, we can adapt the general Theorem 3 of Bartlett, Jordan and McAuliffe (2006)
for $\psi(f)=(1/2)\left((1-f)\ln(1-f)+(1+f)\ln(1+f)\right) \geq f^2/2$ 
corresponding to the Kullback-Leibler risk (Zhang, 2004, Section 3.5), $\alpha=1$ corresponding to (\ref{eq:hcond}) and $c=1/(2h)$ to get
$$ 
\cE_X(\widehat{\eta}_{\widehat M},\eta^*) \leq 
\frac{4}{h}~ EKL({\bp},\widehat{\bp}_{\widehat M})
$$
Applying (\ref{eq:klupperh}) implies then
$$
\cE_X(\widehat{\eta}_{\widehat M},\eta^*) \leq C_1
~\frac{1-4h^2}{1-4\Delta^2}~\frac{\min\left(d_0 \ln \frac{de}{d_0}, r\right)}{nh}
$$

%\end{proof}

\subsubsection*{B2: Proof of Theorem \ref{th:lowerln}}
%\begin{proof}

For any $\tilde{\eta}$ and $\eta^* \in \cC(d_0,h)$ we have
\be \label{eq:b11}
\cE_X(\tilde{\eta},\eta^*) = \frac{1}{n}\sum_{i=1}^n P(\tilde{\eta}_i \neq \eta^*_i) |2p_i-1| \geq \frac{2h}{n}~ E\left(\sum_{i=1}^n I\{\tilde{\eta}_i \neq
\eta^*_i\}\right)=\frac{2h}{n} E||\tilde{\eta}-\eta^*||_1
\ee
for any $X$.

As we have mentioned, the worst case scenario for classification is when $p_i=1/2 \pm h$ or, equivalently,
$|\bbeta^t \bx_i|=\ln\left(\frac{1+2h}{1-2h}\right)$.
Let $V=d_0 \log_2(2d/d_0)$.
In the proof of Lemma \ref{lem:vc} we constructed explicitly the matrix $W_{V \times d}$ whose rows $\bw_1,\ldots, \bw_V$ are shattered by $\cC(d_0)$. Then, for any $\bp=\{\frac{1}{2} \pm h\}^V$
there exists $\bbeta \in \mathbb{R}^d$ such that $||\bbeta||_0 \leq d_0$ and $\bbeta^t \bw_i = \ln \frac{p_i}{1-p_i}=
\pm \ln \frac{1+2h}{1-2h}$ for all $i=1,\ldots,V$. 
Define also the corresponding binary vector
${\bf b}$ with $b_i=I\{\bbeta^t \bw_i \geq 0\}$, that is, $b_i=1$ if $p_i=\frac{1}{2}+h$ and $b_i=0$ if
$p_i=\frac{1}{2}-h$. Obviously, the set of all ${\bf b}$'s is 
a hypercube $H^V=\{0,1\}^V$.

Define now a $n \times d$ design matrix $X_0$ with $\varkappa$ rows
of $\bw_1$, $\varkappa$ rows of $\bw_2$, ..., $\varkappa$ rows
of $\bw_{V-1}$ and the remaining $n-(V-1)\varkappa$ rows of $\bw_V$,
where an integer $1 \leq \varkappa \leq \lfloor\frac{n}{V-1}\rfloor$ will be defined later.
%Obviously, $rank(X_0)=V$. 

The proof will now follow the general scheme of the proof of Theorem 4
of Massart and N\'ed\'elec (2006) but with necessary modifications for the fixed design.

For any $\bp \in \{\frac{1}{2} \pm h\}^V$ and the corresponding ${\bf b} \in H^V$ define
an $n$-dimensional indicator vector $\eta_{\bf b}=(\underbrace{b_1,\ldots,b_1}_{\varkappa},\ldots,
\underbrace{b_{V-1},\ldots,b_{V-1}}_{\varkappa},
\underbrace{b_V,\ldots,b_V}_{n-(V-1)\varkappa})$ and let
$\tilde{C}_{X_0}(d_0,h)=\{\eta_{\bf b},\; {\bf b} \in H^V\}$.
By its design, $\tilde{\cC}_{X_0}(d_0,h) \subseteq
\{\eta: \eta \in \cC(d_0),\;|\bbeta^t \bx_{0i}|=\ln\frac{1+2h}{1-2h},\;i=1,\ldots,n\} \subseteq C_{X_0}(d_0,h)$.

Hence, we can reduce the minimax risk over the entire $\cC_{X_0}(d_0,h)$
to $\tilde{\cC}_{X_0}(d_0,h)$:
\be \label{eq:minmaxreduced}
\inf_{\tilde \eta}\sup_{\eta^* \in \cC_{X_0}(d_0,h)}
\cE_{X_0}(\tilde{\eta},\eta^*) \geq \inf_{\tilde \eta}\sup_{\eta^* \in \tilde{\cC}_{X_0}(d_0,h)}
\cE_{X_0}(\tilde{\eta},\eta^*)
\ee

Furthermore,
for a given $\tilde{\eta}$, define $\tilde{\eta}^*=\arg \min_{\eta
\in \widetilde{\cC}(d_0,h)} ||\tilde{\eta}-\eta||_1$. Then, for
any $\eta^* \in \tilde{\cC}_{X_0}(d_0,h)$ we have
\be \label{eq:l1proj}
||\tilde{\eta}^*-\eta^*||_1 \leq  ||\tilde{\eta}^*-\tilde{\eta}||_1 + 
||\tilde{\eta}-\eta^*||_1 \leq 2 ||\tilde{\eta}-\eta^*||_1
\ee
and, therefore, from (\ref{eq:b11})-(\ref{eq:l1proj})
\be \label{eq:b12}
\begin{split}
\inf_{\tilde \eta}\sup_{\eta^* \in \cC_{X_0}(d_0,h)}\cE_{X_0}(\tilde{\eta},\eta^*) & \geq \frac{h}{n}~ \inf_{\tilde{\eta}^* \in \tilde{\cC}_{X_0}(d_0,h)}\sup_{\eta^* \in \tilde{\cC}_{X_0}(d_0,h)}E ||\tilde{\eta}^*-\eta^*||_1 \\
& \geq \frac{h}{n}~ \varkappa
\inf_{\tilde{\bf b} \in H^V}\sup_{{\bf b}^* \in H^V}
E\left(\sum_{i=1}^{V-1} I\{\tilde{b}_i \neq b^*_i\}\right),
\end{split}
\ee
where  $\tilde{\bf b},{\bf b}^* \in H^V$ are the binary vectors
corresponding to $\tilde{\eta}^*$ and $\eta^*$ respectively (see above).

By a simple calculus one can verify that the square Hellinger distance $H^2\left(Bin(1,\frac{1}{2}+h),Bin(1,\frac{1}{2}-h)\right)$ between two Bernoulli distributions
$Bin(1,\frac{1}{2}+h)$ and $Bin(1,\frac{1}{2}-h)$ is
$1-\sqrt{1-4h^2}$. 
For any ${\bf b} \in H^V$ and the corresponding $\eta_{\bf b}$
define $\bp_{\bf b} \in \mathbb{R}^n$ as follows: $p_{{\bf b}i}=\frac{1}{2}+h$ if $\eta_{{\bf b}i}=1$ and $p_{{\bf b}i}=\frac{1}{2}-h$ if $\eta_{{\bf b}i}=0,\;i=1,\ldots,\varkappa (V-1)$, and $p_{{\bf b}i}=0,\;
i=\varkappa (V-1)+1,\ldots,n$.
Then, for any ${\bf b}_1, {\bf b}_2 \in H^V$ and the corresponding $\bp_{{\bf b}_1}$ and $\bp_{{\bf b}_2}$ we have
$$
H^2(\bp_{{\bf b}_1},\bp_{{\bf b}_2})=\frac{1}{n}\sum_{i=1}^{n} H^2\left(Bin(1,p_{b_{1i}}),Bin(1,p_{b_{2i}})\right)=\frac{\varkappa}{n} (1-\sqrt{1-4h^2}) \sum_{i=1}^{V-1} I\{b_{1i} \neq b_{2i}\}
$$

Hence, applying the version of Assouad's lemma given in Lemma 7 of
Barron, Birg\'e and Massart (1999) yields
$$
\inf_{\tilde{\bf b} \in H^V}\sup_{{\bf b}^* \in H^V}
E\left(\sum_{i=1}^{V-1} I\{\tilde{b}_i \neq b^*_i\}\right) \geq
\frac{V-1}{2} \left(1-\sqrt{2 \varkappa (1-\sqrt{1-4h^2}}\right) 
\geq \frac{V-1}{2} \left(1-\sqrt{8 \varkappa h^2}\right)
$$
that together with (\ref{eq:b12}) implies
\be \label{eq:infsup}
\inf_{\tilde \eta}\sup_{\eta^* \in \cC_{X_0}(d_0,h)}\cE_{X_0}(\tilde{\eta},\eta^*) \geq \varkappa \frac{h}{n} \frac{V-1}{2} \left(1-\sqrt{8 \varkappa h^2}\right)
\ee

\noindent
Consider two cases.
\newline
{\em Case 1.} $h \leq \frac{1}{6}$.
\noindent
\newline
For $h \geq \sqrt{\frac{V-1}{18n}}$, apply (\ref{eq:infsup}) for $\varkappa=\lfloor\frac{1}{18h^2}\rfloor$ (note that $2 \leq \varkappa \leq \lfloor\frac{n}{V-1}\rfloor$), to get
$$
\inf_{\tilde \eta}\sup_{\eta^* \in \cC_{X_0}(d_0,h)}\cE_{X_0}(\tilde{\eta},\eta^*) \geq \frac{V-1}{216 n h} \geq C_2 
\frac{d_0 \ln(\frac{de}{d_0})}{nh}
$$

For  $h < \sqrt{\frac{V-1}{18n}}$, one can follow all the above
arguments for $\tilde{h}=\sqrt{\frac{V-1}{18n}}$ and the corresponding
$\varkappa=\lfloor\frac{n}{V-1}\rfloor$ to have
$$
\inf_{\tilde \eta}\sup_{\eta^* \in \cC_{X_0}(d_0,h)}\cE_{X_0}(\tilde{\eta},\eta^*) \geq \inf_{\tilde \eta}\sup_{\eta^* \in \cC_{X_0}(d_0,\tilde{h})}\cE_{X_0}(\tilde{\eta},\eta^*)
\geq C_2 \sqrt{\frac{d_0 \ln(\frac{de}{d_0})}{n}}
$$

\noindent
\newline
{\em Case 2.} $h > \frac{1}{6}$.
\noindent
\newline
Set $\varkappa=1$ and note that 
$C_{X_0}(d_0,\frac{1}{2}) \subseteq C_{X_0}(d_0,h)$ for any $0 \leq h \leq \frac{1}{2}$. Hence, (\ref{eq:b12}) implies
%\be \label{eq:b13}
\be \nonumber
\begin{split}
\inf_{\tilde \eta}\sup_{\eta^* \in \cC_{X_0}(d_0,h)}\cE_{X_0}(\tilde{\eta},\eta^*) \geq  &
\inf_{\tilde \eta}\sup_{\eta^* \in \cC_{X_0}(d_0,\frac{1}{2})}\cE_{X_0}(\tilde{\eta},\eta^*) 
\geq \frac{1}{2n} \inf_{\tilde{\bf b} \in H^V}\sup_{{\bf b}^* \in H^V}
E\left(\sum_{i=1}^{V-1} I\{\tilde{b}_i \neq b^*_i\}\right) \\ 
\geq & \frac{1}{2n} \inf_{\tilde{\bf b} \in H^V}
\frac{1}{2^V} \sum_{{\bf b}_j \in H^V}E\left(\sum_{i=1}^{V-1} I\{\tilde{b}_i \neq b_{ji}\}\right)  \\
=& \frac{1}{2n} \inf_{\tilde{\bf b} \in H^V}\sum_{i=1}^{V-1}\frac{1}{2^V} \sum_{j=1}^{2^V} 
P(\tilde{b}_i\neq b_{ji})
\end{split}
\ee
By obvious combinatoric calculus, for any (binary) vector $\tilde{\bf b}$, 
$\frac{1}{2^V} \sum_{j=1}^{2^V} 
P(\tilde{b}_i\neq b_{ji})=\frac{1}{2}$ for any $i$ and, therefore,
$$
\inf_{\tilde \eta}\sup_{\eta^* \in \cC_{X_0}(d_0,h)}\cE_{X_0}(\tilde{\eta},\eta^*) \geq  \frac{V-1}{4n} 
\geq   C_2 \frac{d_0 \ln\frac{de}{d_0}}{nh}
$$
for large $h>\frac{1}{6}$ (in fact, larger than
any fixed $h_0$).

\subsection*{Appendix C: Slope estimator for a general GLM} 
Consider a GLM setup with a response variable $Y$ and a set of $d$
predictors $x_1,...,x_d$. We observe a series of independent
observations $(\bx_i,Y_i),\;i=1,\ldots, n$, where the design points $\bx_i \in
\mathbb{R}^p$ are deterministic. The distribution $f_{\theta_i}(y)$ of $Y_i$
belongs to a (one-parameter) natural exponential family with a natural
parameter $\theta_i$ and a scaling parameter $a$:
\be
\label{eq:modelglm} f_{\theta_i}(y)=\exp\left\{\frac{y \theta_i -
b(\theta_i)}{a}+c(y,a)\right\}
\ee
The function $b(\cdot)$ is assumed to be twice-differentiable.
In this case
$\mE(Y_i)=b'(\theta_i)$ and $Var(Y_i)=ab''(\theta_i)$. To complete
GLM we assume the canonical link
$\theta_i=\bbeta^{t}\bx_i$ or, equivalently, in the matrix form,
$\btheta=X\bbeta$, where $X_{n \times p}$ is the design matrix and
$\bbeta \in \mathbb{R}^p$ is a vector of the unknown regression
coefficients. The logistic regression (\ref{eq:model}) is a particular case of
a general GLM (\ref{eq:modelglm}) for the Bernoulli distribution $Bin(1,p_i)$, where the natural
parameter is $\theta=\ln\frac{p}{1-p}, b(\theta_0)=\ln(1+e^{\theta})$ and $a=1$.

Following  Abramovich and Grinshtein (2016) assume the extended version of
Assumption (A) for GLM~:
\begin{assumption}[\bf A'] \label{as:A'}
\noindent
\begin{enumerate}
\item Assume that $\theta_i \in \Theta$, where the parameter
space $\Theta \subseteq \mathbb{R}$ is a closed (finite or infinite) interval.
\item Assume that there exist constants $0 < \cl \leq \cu < \infty$ such that the
function $b''(\cdot)$ satisfies the following conditions:
\begin{enumerate}
\item $\sup_{t \in \mathbb{R}} b''(t) \leq \cu$
\item $\inf_{t \in \Theta} b''(t) \geq \cl$
\end{enumerate}
\end{enumerate}
\end{assumption}
Conditions on $b''(\cdot)$ in
Assumption (A') are intended to exclude two degenerate
cases, where the variance $Var(Y)$ is infinitely large or small. They
also ensure strong convexity of $b(\cdot)$ over
$\Theta$.  For the binomial distribution, $\cu=1/4$ and Assumption (A') reduces to Assumption (A) with $\cl=\delta(1-\delta)$.

Recall that the Slope estimator is a penalized maximum likelihood with an ordered
$l_1$-norm penalty and, therefore, defined for a GLM as follows:
\be \label{eq:slopeglm}
\bbetas= \arg \min_{\widetilde \bbeta}\left\{-\ell({\widetilde \bbeta})+\sum_{j=1}^d \lambda_j |{\widetilde \bbeta}|_{(j)} \right\}=
\arg \min_{\widetilde \bbeta} 
\left\{b(X{\widetilde \bbeta})^t {\bf 1}-\bY^t X{\widetilde \bbeta}+
\sum_{j=1}^d \lambda_j |{\widetilde \bbeta}|_{(j)} \right\}
\ee
for $\lambda_1 \geq \cdots \geq \lambda_d > 0$.
The corresponding Kullback-Leibler risk
\be \label{eq:mE}
\mE KL(\btheta,\bthetas)=
\frac{1}{n} \frac{1}{a} \left(b'(\btheta)^t(\btheta-\mE(\bthetas))-(b(\btheta)-\mE b(\bthetas))^t {\bf 1}\right)
\ee
where $\btheta=X\bbeta$ and $\bthetas=X\bbetas$ (see Abramovich and Grinshtein, 2016).

\begin{theorem} \label{th:slopeglm}
Consider a GLM (\ref{eq:modelglm}), where $||\bbeta||_0 \leq d_0$, the columns of the design matrix $X$ are normalized to have unit norms and $X$ satisfies the $WRE(d_0,c_0)$ condition for 
some $c_0>1$. Assume that Assumption (A') holds. 

Let 
\be \label{eq:lambdaglm}
\lambda_j= A~  \frac{c_0+1}{c_0-1}~\sqrt{\frac{\cu}{a}}~\sqrt{\ln(2d/j)},\;\;\;j=1,\ldots,d,
\ee
in (\ref{eq:slopeglm}) with the constant $A \geq 40\sqrt{6}$.

Then, simultaneously for all $\bbeta \in \mathbb{R}^d$ such that $||\bbeta||_0 \leq d_0$,
\begin{enumerate}
\item 
\be \label{eq:klslopeglm}
\begin{split}
P&\left(KL(\btheta,\bthetas)  \leq \frac{8A^2}{n} \frac{c_0^2}{(c_0-1)^2}
~ \frac{\cu}{\cl} \max\left\{\left(\sqrt{\pi/2}+\sqrt{2\ln \Delta^{-1}}\right)^2~,~\frac{d_0}{\kappa^2(d_0,c_0)} \ln\left(\frac{2de}{d_0}\right) \right\} \right) \\ 
&\geq 1-\Delta
\end{split}
\ee
for any $0 < \Delta < 1$.

\item 
\be  \label{eq:kl1slopeglm} 
\mE KL(\btheta,\bthetas) \leq 8A^2 \frac{c_0^2}{(c_0-1)^2}~
\frac{\cu}{\cl} \left(\frac{2\pi+8}{\ln(2d)}+\frac{1}{\kappa^2(d_0,c_0)}\right)
\frac{d_0}{n} \ln\left(\frac{2de}{d_0}\right)
\ee
\end{enumerate}
\end{theorem}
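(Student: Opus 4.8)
The plan is to adapt the Gaussian Slope analysis of Bellec, Lecu\'e and Tsybakov (2018) by replacing the quadratic loss with the Bregman divergence of $b$ and the Gaussian noise with the centered response $\bY-\mE\bY$. First I would write down the \emph{basic inequality} that follows from $\bbetas$ minimizing the penalized criterion in (\ref{eq:slopeglm}): comparing its value at $\bbetas$ and at the true $\bbeta$ gives $\ell(\bbeta)-\ell(\bbetas)\leq \sum_{j=1}^d\lambda_j\big(|\bbeta|_{(j)}-|\bbetas|_{(j)}\big)$. Setting $\bu=\bbetas-\bbeta$, $\btheta=X\bbeta$, $\bthetas=X\bbetas$, and adding and subtracting $b'(\btheta)=\mE\bY$, the left side decomposes exactly as $\ell(\bbeta)-\ell(\bbetas)=n\,KL(\btheta,\bthetas)-\frac{1}{a}\langle X^t(\bY-\mE\bY),\bu\rangle$, where the first term is the nonnegative Bregman divergence of $b$ and the second is the stochastic term. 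This reduces everything to $n\,KL(\btheta,\bthetas)\leq \frac{1}{a}\langle X^t(\bY-\mE\bY),\bu\rangle+\sum_j\lambda_j(|\bbeta|_{(j)}-|\bbetas|_{(j)})$.

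The crux, and the main obstacle, is to control the stochastic term. By the Hardy--Littlewood rearrangement inequality, $\frac{1}{a}\langle X^t(\bY-\mE\bY),\bu\rangle\leq\sum_{j=1}^d \frac{1}{a}|X^t(\bY-\mE\bY)|_{(j)}\,|\bu|_{(j)}$, so everything rests on the \emph{good event} $\Omega=\{\frac{1}{a}|X^t(\bY-\mE\bY)|_{(j)}\leq\frac{c_0-1}{c_0+1}\lambda_j\text{ for all }j\}$. Under Assumption (A') each coordinate $(X^t(\bY-\mE\bY))_k=\sum_i X_{ik}(Y_i-\mE Y_i)$ is a sum of independent centered increments with $\mathrm{Var}\leq a\,\cu$ (unit-norm columns), hence sub-Gaussian; I would invoke a Hoeffding/Bernstein bound together with a binomial-tail union bound over the order statistics, calibrated to the choice $\lambda_j\propto\sqrt{\ln(2d/j)}$, to get $P(\Omega^c)\leq\Delta$. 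This is precisely the step that forces $A\geq 40\sqrt{6}$ and introduces the factor $\sqrt{\cu/a}$ in (\ref{eq:lambdaglm}), and it is the hardest part because, unlike the exactly-Gaussian case, one must replace Gaussian concentration for the sorted process by a sub-Gaussian analogue while keeping every constant explicit.

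On $\Omega$ the stochastic term is dominated by $\frac{c_0-1}{c_0+1}\sum_j\lambda_j|\bu|_{(j)}$, and a standard sorted-$\ell_1$ manipulation (splitting the penalty over the first $d_0$ coordinates and the rest, using $d_0$-sparsity of $\bbeta$) both shows $\bu\in\cS(d_0,c_0)$ and bounds the right side by a constant multiple of $\sum_{j\leq d_0}\lambda_j|\bu|_{(j)}\leq\big(\sum_{j\leq d_0}\lambda_j^2\big)^{1/2}\|\bu\|_2$. The cone membership lets me apply the $WRE(d_0,c_0)$ condition, $\|\bu\|_2\leq\|X\bu\|_2/\kappa(d_0,c_0)$, while a second-order Taylor expansion of $b$ with $b''\geq\cl$ gives the curvature lower bound $n\,KL(\btheta,\bthetas)\geq\frac{\cl}{2a}\|X\bu\|_2^2$. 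Combining the three ingredients yields $\frac{\cl}{2a}\|X\bu\|_2^2\lesssim\kappa^{-1}\big(\sum_{j\leq d_0}\lambda_j^2\big)^{1/2}\|X\bu\|_2$; solving this quadratic for $\|X\bu\|_2$ and substituting back produces the $\kappa^{-2}$ branch of (\ref{eq:klslopeglm}), and using $\sum_{j\leq d_0}\lambda_j^2\asymp\frac{\cu}{a}\frac{(c_0+1)^2}{(c_0-1)^2}A^2 d_0\ln(2de/d_0)$ gives exactly the rate $\frac{\cu}{\cl}\frac{d_0}{n}\ln(2de/d_0)$. The other branch, with $(\sqrt{\pi/2}+\sqrt{2\ln\Delta^{-1}})^2$, is the regime where the deterministic noise level rather than the sparsity term controls $KL$, and it comes straight out of the tail bound for $\Omega^c$.

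Finally, the expectation bound (\ref{eq:kl1slopeglm}) follows from (\ref{eq:klslopeglm}) by integrating the tail over $\Delta\in(0,1)$: the integrals $\int_0^1\ln\Delta^{-1}\,d\Delta=1$ and $\int_0^1\sqrt{\ln\Delta^{-1}}\,d\Delta=\sqrt{\pi}/2$ turn the $\Delta$-dependent branch into an $O(1)$ constant, which is then absorbed into the convenient form $\frac{2\pi+8}{\ln(2d)}d_0\ln(2de/d_0)$ using $d_0\ln(2de/d_0)\geq\ln(2d)$. One subtlety I would have to handle carefully is that $b''\geq\cl$ holds only on $\Theta$, whereas $\bthetas$ is unconstrained; since on $\Omega$ the error is small, a localization argument guarantees the segment between $\btheta$ and $\bthetas$ stays in a neighborhood of $\Theta$ where the curvature lower bound remains valid. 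The logistic Theorem \ref{th:slope} is then recovered by setting $a=1$, $\cu=1/4$, $\cl=\delta(1-\delta)$, which also explains why the admissible constant there is $A\geq 20\sqrt{6}$, half of the general $40\sqrt{6}$.
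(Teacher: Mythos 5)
Your skeleton matches the paper's proof up to the stochastic term: the paper uses the same basic inequality, the same identity $n\,KL(\btheta,\bthetas)=\ell(\bbeta)-\ell(\bbetas)+\frac{1}{a}(\bY-b'(\btheta))^t(\bthetas-\btheta)$ (eq.\ (29) of Abramovich and Grinshtein, 2016), the same Lemma A.1 of Bellec, Lecu\'e and Tsybakov (2018) for the penalty difference, the same cone-membership-from-positivity argument feeding into $WRE(d_0,c_0)$, and the KL/$\ell_2$ equivalence $\frac{\cl}{2a}\|\bthetas-\btheta\|_2^2\le n\,KL(\btheta,\bthetas)\le\frac{\cu}{2a}\|\bthetas-\btheta\|_2^2$ from Lemma 1 of Abramovich and Grinshtein (2016), which also disposes of your localization worry without any extra argument. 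The genuine gap is your control of the stochastic term. You propose the fixed event $\Omega=\{\frac1a|X^t(\bY-\mE\bY)|_{(j)}\le\frac{c_0-1}{c_0+1}\lambda_j \text{ for all } j\}$ handled by a union bound over order statistics. But the $\lambda_j$ are part of the estimator and cannot depend on $\Delta$, so $P(\Omega^c)$ is a single number fixed by $A$, $c_0$ and $d$; it cannot be tuned to an arbitrary $\Delta\in(0,1)$. Your route therefore yields at best a fixed-confidence bound containing only the $\kappa^{-2}(d_0,c_0)\,d_0\ln(2de/d_0)$ branch; it cannot produce the statement ``for any $0<\Delta<1$'' with the $(\sqrt{\pi/2}+\sqrt{2\ln\Delta^{-1}})^2$ branch --- your claim that this branch ``comes straight out of the tail bound for $\Omega^c$'' is exactly where the argument breaks --- and consequently the expectation bound in part 2 cannot be obtained by integration either.

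What the paper does instead: it observes that $Y-b'(\theta)$ is sub-Gaussian with scale $\sqrt{a\cu}$ (so $\mE e^{(Y-b'(\theta))^2/(6\cu a)}\le e$) and invokes Theorem 9.1 of Bellec, Lecu\'e and Tsybakov (2018), a concentration bound for the whole linear process that gives, simultaneously at every level $1-\Delta$, $\frac{1}{na}(\bY-b'(\btheta))^t(\bthetas-\btheta)\le \frac{40\sqrt{6\cu}}{n\sqrt a}\max\bigl(H(\bu),\,\|\bthetas-\btheta\|_2(\sqrt{\pi/2}+\sqrt{2\ln\Delta^{-1}})\bigr)$ with $H(\bu)=\sum_j|u|_{(j)}\sqrt{\ln(2d/j)}$; this is also the true source of the constant $A\ge 40\sqrt6$, not a hand-rolled union bound. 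The proof then splits into Case 1, $\tilde H(\bu)\le G(\bu)$, closed by Young's inequality with $\epsilon=\cl/(2a)$ plus the norm equivalence to absorb $\|\bthetas-\btheta\|_2^2$ into $KL$, yielding the $\Delta$-dependent branch, and Case 2, which is essentially your cone/WRE computation with $\epsilon=\cl/(4a)$. Part 2 follows by integrating the resulting tail in $t$ (not in $\Delta$) and massaging the max into the $\frac{2\pi+8}{\ln(2d)}+\kappa^{-2}(d_0,c_0)$ form. To repair your proof you would need precisely such a $\Delta$-uniform two-branch bound on the stochastic term, which a per-coordinate union bound with fixed thresholds cannot supply. (Your closing remark recovering $A\ge 20\sqrt6$ in the logistic case from $\sqrt{\cu/a}=1/2$ is correct.)
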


\begin{proof}
Since $\bbetas$ is the minimizer of (\ref{eq:slopeglm}), 
$$
-\ell(\bbetas)+\sum_{j=1}^d \lambda_j |\bbetas|_{(j)} \leq -\ell(\bbeta)+\sum_{j=1}^d \lambda_j |\bbeta|_{(j)}
$$
From (29) of Abramovich and Grinshtein (2016) one has
$$
n~ KL(\btheta,\bthetas)=\ell(\bbeta)-\ell(\bbetas)+
\frac{1}{a}(\bY-b'(\btheta))^t(\bthetas-\btheta)
$$
(recall that the Kullback-Leibler divergence $KL(\cdot,\cdot)$ in Abramovich and Grinshtein, 2016 was defined as $n$ times 
$KL(\cdot,\cdot)$ in this paper). Thus, 
\be \label{eq:a1}
KL(\btheta,\bthetas) \leq \frac{1}{n~ a}(\bY-b'(\btheta))^t(\bthetas-\btheta) +
\frac{1}{n}\left(\sum_{j=1}^d \lambda_j |\bbeta|_{(j)}-\sum_{j=1}^d \lambda_j |\bbetas|_{(j)} \right)
\ee
Let $\bu=\bbetas-\bbeta$.
Applying Lemma A.1 of Bellec, Lecu\'e and Tsybakov (2018) with $\tau=0$ implies 
\be \label{eq:B}
\sum_{j=1}^d \lambda_j |\bbeta|_{(j)}-\sum_{j=1}^d \lambda_j |\bbetas|_{(j)} 
\leq \sqrt{\sum_{j=1}^{d_0} \lambda_j^2}~ ||\bu||_2-\sum_{j=d_0+1}^d \lambda_j |u|_{(j)}
\ee
Consider now the first term of the RHS in (\ref{eq:a1}).  
Since the distribution of $Y$ belongs to the exponential family with
the bounded variance $ab''(\theta) \leq a\cu$ (Assumption (A')), a centered 
zero mean random variable $Y-b'(\theta)$ is sub-Gaussian with the scale factor
$\sqrt{a\cu}$, that is, $\mE e^{t(Y-b'(\theta))} \leq e^{a\cu t^2/2}$ and, therefore, $\mE e^{(Y-b'(\theta))^2/(6 \cu a)} \leq e$. 
Applying Theorem 9.1 of Bellec, Lecu\'e and Tsybakov (2018) (adapted to our normalization conditions on the columns of $X$) yields
\be \label{eq:subgauss}
\frac{1}{n a}(\bY-b'(\btheta))^t(\bthetas-\btheta) 
\leq \frac{40 \sqrt{6 \cu}}{n\sqrt{a}}~ \max\left(\sum_{j=1}^d |u|_{(j)} \sqrt{\ln(2d/j)}~,~
||\bthetas-\btheta||_2(\sqrt{\pi/2}+\sqrt{2 \ln \Delta^{-1}})\right)
\ee 
with probability at least
$1-\Delta$.

Set
\be \label{eq:H}
H(\bu)=\sum_{j=1}^d |u|_{(j)} \sqrt{\ln(2d/j)} \leq ||\bu||_2
\sqrt{\sum_{j=1}^{d_0} \ln(2d/j)}+\sum_{j=d_0+1}^d |u|_{(j)} \sqrt{\ln(2d/j)}
=\tilde{H}(\bu)
\ee
and
\be \label{eq:G}
G(\bu)= ||\bthetas-\btheta||_2 \left(\sqrt{\pi/2}+\sqrt{2 \ln \Delta^{-1}}\right)
\ee

The proof will now go along the lines of the proof of Theorem 6.1 of Bellec, Lecu\'e and Tsybakov (2018) for Gaussian regression with necessary adaptations to GLM and different normalization conditions on the columns of $X$.

To prove (\ref{eq:klslopeglm}) consider two cases.

\vspace{.2cm}
\noindent
$Case~  1.~ \tilde{H}(\bu) \leq G(\bu)$. In this case 
$$
||u||_2 \leq \frac{||\bthetas-\btheta||_2(\sqrt{\pi/2}+\sqrt{2 \ln \Delta^{-1})}} {\sqrt{\sum_{j=1}^{d_0} \ln(2d/j)}}
$$
and, therefore, combining (\ref{eq:lambdaglm}) and (\ref{eq:a1})-(\ref{eq:G}) with probability at least $1-\Delta$ yields
\be 
\begin{split} \label{eq:a2}
KL(\btheta,\bthetas) & \leq \frac{1}{n}~ A \sqrt{\frac{\cu}{a}}~ \frac{2c_0}{c_0-1}~ ||\bthetas-\btheta||_2(\sqrt{\pi/2}+\sqrt{2 \ln \Delta^{-1}}) \\
& \leq
\frac{1}{2n}~\left(\frac{A^2 \cu}{\epsilon a}~\left(\frac{2c_0}{c_0-1}\right)^2
(\sqrt{\pi/2}+\sqrt{2 \ln \Delta^{-1}})^2 + \epsilon ||\bthetas-\btheta||_2^2\right)
\end{split}
\ee
for any $\epsilon>0$. 

Lemma 1 of Abramovich and Grinshtein (2016) established the equivalence of
the Kullback-Leibler divergence $KL(\btheta,\bthetas)$ and the squared quadratic
norm $||\bthetas-\btheta||^2$ under Assumption (A'):
\be \label{eq:normeq}
\frac{\cl}{2a}||\bthetas-\btheta||_2^2 \leq n KL(\btheta,\bthetas) \leq \frac{\cu}{2a}||\bthetas-\btheta||_2^2
\ee
Hence, taking $\epsilon=\cl/(2a)$ in (\ref{eq:a2}) after a straightforward calculus yields 
\be \label{eq:case1}
KL(\btheta,\bthetas) \leq \frac{8}{n} \frac{c_0^2}{(c_0-1)^2}~ \frac{\cu}{\cl}~
A^2 (\sqrt{\pi/2}+\sqrt{2 \ln \Delta^{-1}})^2 
\ee 
with probability at least $1-\Delta$.

\vspace{.2cm}
\noindent
$Case~  2.~ \tilde{H}(\bu) > G(\bu)$. Using the definition of $\lambda_j$'s in (\ref{eq:lambdaglm}) and (\ref{eq:a1})-(\ref{eq:G}), with probability at least $1-\Delta$  we have
\be \label{eq:a3}
\begin{split}
KL(\btheta,\bthetas) & \leq \frac{1}{n}~40\sqrt{\frac{6\cu}{a}}\left(||\bu||_2~ 
\sqrt{\sum_{j=1}^{d_0} \ln(2d/j)}+\sum_{j=d_0+1}^d |u|_{(j)} \sqrt{\ln(2d/j)}\right) \\
& +\frac{1}{n}\left(\sqrt{\sum_{j=1}^{d_0} \lambda_j^2}~ ||\bu||_2-\sum_{j=d_0+1}^d \lambda_j |u|_{(j)}\right) \\
& \leq \frac{1}{n}\left(\frac{2c_0}{c_0+1} ||\bu||_2~\sqrt{\sum_{j=1}^{d_0} \lambda_j^2} -
\frac{2}{c_0+1} \sum_{j=d_0+1}^d \lambda_j |u|_{(j)} \right)
\end{split}
\ee
The $KL(\btheta,\bthetas) \geq 0$ and, therefore, the RHS of (\ref{eq:a3}) is necessarily positive. Thus,
$$
\sum_{j=1}^d |u|_{(j)} \sqrt{\ln(2d/j)} \leq ||\bu||_2~ \sqrt{\sum_{j=1}^{d_0}
\ln(2d/j)}+\sum_{j=d_0+1}^d|u|_{(j)} \sqrt{\ln(2d/j)}
\leq (1+c_0)||\bu|| \sqrt{\sum_{j=1}^{d_0} \ln(2d/j)}
$$
and, therefore, by $WRE(d_0,c_0)$ condition, (\ref{eq:a3}) implies
\be\nonumber
\begin{split}
KL(\btheta,\bthetas) & \leq \frac{1}{n}~ \frac{2 c_0}{c_0+1}~||\bu||_2~\sqrt{\sum_{j=1}^{d_0} \lambda_j^2}
~\leq~ \frac{1}{n}~ \frac{2 c_0}{c_0+1} \frac{||\bthetas-\btheta||_2}{\kappa(c_0,d_0)}
\sqrt{\sum_{j=1}^{d_0} \lambda_j^2} \\
& \leq \frac{1}{n} \left(\frac{c_0^2}{\epsilon(c_0+1)^2}~
\frac{\sum_{j=1}^{d_0} \lambda_j^2}{\kappa^2(c_0,d_0)}+\epsilon ||\bthetas-\btheta||_2^2\right)
\end{split}
\ee
for any $\epsilon>0$. 
Taking  $\epsilon=\cl/(4a)$ and exploiting the equivalence
between $KL(\btheta,\bthetas)$ and $||\bthetas-\btheta||_2^2$ in (\ref{eq:normeq})
imply that with probability at least $1-\Delta$,
$$
KL(\btheta,\bthetas) \leq \frac{1}{n}~ \frac{1}{\cl}~\frac{8 a  c_0^2}{(c_0+1)^2}  \frac{\sum_{j=1}^{d_0}\lambda_j^2}{\kappa^2(c_0,d_0)}
\leq \frac{8}{n} \frac{c_0^2}{(c_0-1)^2} \frac{\cu}{\cl}~ A^2 \frac{d_0 \ln(2de/d_0)}{\kappa^2(c_0,d_0)},
$$
where we used the definition (\ref{eq:lambdaglm}) of $\lambda_j$'s and  the upper bound
$\sum_{j=1}^{d_0}\ln(2d/j) \leq  d_0 \ln(2ed/d_0)$
(see, e.g., (2.7) of Bellec, Lecu\'e and Tsybakov, 2018).

To prove the second statement (\ref{eq:kl1slopeglm}) of the theorem denote
$C^*=8A^2 \frac{c_0^2}{(c_0-1)^2}$ and note that
\be \label{eq:a7}
\begin{split}
& C^*
\frac{1}{n}~ \frac{\cu}{\cl} \max\left\{\left(\sqrt{\pi/2}+\sqrt{2\ln \Delta^{-1}}\right)^2,~\frac{d_0}{\kappa^2(d_0,c_0)} \ln\left(\frac{2de}{d_0}\right) \right\} \\  
& \leq~ C^*
\frac{1}{n}~
\frac{\cu}{\cl} \max\left\{2\pi,~8 \ln \Delta^{-1},~\frac{d_0}{\kappa^2(d_0,c_0)} \ln\left(\frac{2de}{d_0}\right) \right\} \\
& \leq~C^*
\frac{1}{n}~\frac{\cu}{\cl} \max\left\{\max\left(\frac{2\pi}{\ln(2d)},~\frac{1}{\kappa^2(d_0,c_0)}\right)
d_0 \ln\left(\frac{2de}{d_0}\right),~8 \ln\Delta^{-1} \right\} \\
& \leq ~C^*
\frac{1}{n}~\frac{\cu}{\cl} \max\left\{\left(\frac{2\pi}{\ln(2d)}+\frac{1}{\kappa^2(d_0,c_0)}\right)
d_0 \ln\left(\frac{2de}{d_0}\right),~8 \ln\Delta^{-1} \right\}
\end{split}
\ee
Then, by integrating, (\ref{eq:klslopeglm}) and (\ref{eq:a7}) after a straightforward calculus yield
\be \nonumber
\begin{split}
\mE KL(\btheta,\bthetas)& =\int_0^{\infty} P\left(KL(\btheta,\bthetas) \geq t \right) 
dt  \\ 
& \leq   C^*
\frac{1}{n}~
\frac{\cu}{\cl} \left(\left(\frac{2\pi}{\ln(2d)}+\frac{1}{\kappa^2(d_0,c_0)}\right)
d_0 \ln\left(\frac{2de}{d_0}\right) + 8\left(\frac{2de}{d_0}\right)^{-\frac{d_0}{8}\max\{\frac{2\pi}{\ln(2d)},\kappa^{-2}(d_0,c_0)\}} \right) \\
& \leq  
C^*~
\frac{\cu}{\cl} \left(\frac{2\pi+8}{\ln(2d)}+\frac{1}{\kappa^2(d_0,c_0)}\right)
\frac{d_0}{n} \ln\left(\frac{2de}{d_0}\right)
\end{split}
\ee

\end{proof}

\subsection*{Appendix D: Proof of Theorem \ref{th:01upper_randoml}}
We first introduce several notations. Let $||g||_{L_2}=(\int_\cX g^2(\bx)d\bx)^{1/2}$ be a standard $L_2$-norm of a function $g$ and $||g||_{L_2(q)}=(\int_\cX g^2(\bx)q(\bx)d\bx)^{1/2}$ be the $L_2$-norm of $g$ weighted by the marginal distribution $q$ of $\bX$. In addition, the $L_\infty$-norm  $||g||_\infty=\sup_{\bx \in \cX} |g(\bx)|$. 

Applying Theorem 3 of Bartlett, Jordan and McAlliffe (2006) for
$\psi(g)=g^2/2$, Assumption (B1) implies that there exists $C>0$ such that
\be \label{eq:L2q}
\cE(\widehat{\eta}_{\widehat M},\eta^*) \leq C \left(E||\widehat{p}_{\widehat M}-p||^2_{L^2(q)}\right)^{\frac{\alpha+1}{\alpha+2}}
\ee
%and the rest of the proof is devoted to bounding 
%$E||\widehat{p}_{\widehat M}-p||^2_{L^2(q)}$.

Furthermore, let $f_{\bbeta}(\bx,y)$  be the joint 
distribution of $(\bX,Y)$ for a given $\bbeta$, i.e.
$f_{\bbeta}(\bx,y)=p(\bx)^y(1-p(\bx))^{1-y}q(\bx)$,
where $p(\bx)=\frac{\exp\{\bbeta^t \bx \}}{1+\exp\{\bbeta^t \bx \}}$.
Consider the square Hellinger distance $H^2(Bin(1,p_1),Bin(1,p_2))$
between two Bernoulli distributions with success probabilities
$p_1$ and $p_2$. It is easy to verify that
$H^2(Bin(1,p_1),Bin(1,p_2)) \geq \frac{(p_1-p_2)^2}{4(1-\delta)}
\geq \frac{(p_1-p_2)^2}{2}$.
Then for the square Hellinger distance $d^2_H(f_{\bbeta_1},f_{\bbeta_2})$ between $f_{\bbeta_1}$ and $f_{\bbeta_2}$ we have
\be \label{eq:h2lq}
d^2_H(f_{\bbeta_1},f_{\bbeta_2})=\int H^2(Bin(1,p_1(\bx)),
Bin(1,p_2(\bx)) q(\bx) d\bx \geq \frac{1}{2}||p_1-p_2||^2_{L^2(q)} 
\ee
and from (\ref{eq:L2q}) it is, therefore,  sufficient to bound the Hellinger risk 
$Ed^2_H(f_{\widehat \bbeta_{\widehat M}},f_{\bbeta})$.

We will show that the penalty (\ref{eq:penalty_random}) falls within a general
class of penalties considered in Yang and Barron (1998) and then apply their Theorem 1 to find an upper bound for $Ed^2_H(f_{\widehat \bbeta_{\widehat M}},f_{\bbeta})$. 

Using the standard inequality $\ln(1+t) \leq t$, under Assumption (A1) we have
\be \nonumber
\begin{split}
|\ln f_{\bbeta_2}(\bx,y)-\ln f_{\bbeta_1}(\bx,y)|&=\left|y \ln \frac{p_2(\bx)}{p_1(\bx)}+
(1-y)\ln\frac{1-p_2(\bx)}{1-p_1(\bx)}\right| \leq \max \left(\left|\ln \frac{p_2(\bx)}{p_1(\bx)}\right|,
\left|\ln \frac{1-p_2(\bx)}{1-p_1(\bx)}\right|\right) \\ 
& 
\leq \frac{1}{\delta}|p_2(\bx)-p_1(\bx)|
\end{split}
\ee
Define $\rho(f_{\bbeta_1},f_{\bbeta_2})=||\ln f_{\bbeta_2}-\ln f_{\bbeta_1}||_\infty$. Thus,
\be \label{eq:helinfty}
\rho(f_{\bbeta_1},f_{\bbeta_2}) \leq \frac{1}{\delta}||p_2-p_1||_\infty
\ee

For a given model $M$ consider the set of coefficients $\cb_M$ defined in Section \ref{sec:notation}.
One can easily verify that under Assumption (A1), for any $\bbeta_1, \bbeta_2 \in \cb_M$ and the corresponding $p_1(\bx), p_2(\bx)$ 
\be \label{eq:equivnorm}
\delta(1-\delta)\left|(\bbeta_2-\bbeta_1)^t \bx\right| \leq 
\left|p_2(\bx)-p_1(\bx)\right| \leq
\frac{1}{4}\left|(\bbeta_2-\bbeta_1)^t \bx\right|
\ee
for any $\bx \in \cX$.

In particular, (\ref{eq:equivnorm}) implies
\be \label{eq:l2norm}
\left||p_2(\bx)-p_1(\bx)\right||_{L_2(q)}  \geq \delta(1-\delta) \sqrt{(\bbeta_2-\bbeta_1)^t G (\bbeta_2-\bbeta_1)} \geq \delta (1-\delta) \sqrt{\lambda_{min}(G)}~
||\bbeta_2-\bbeta_1||_2,
\ee
where recall that $G=E(\bX \bX^t)$ and
$\lambda_{min}(G)>0$ is its minimal eigenvalue.

For each $\bbeta_0 \in \cb_M$ consider the corresponding
Hellinger ball $\cH_{f_{\bbeta_0},r}=\{f_{\bbeta}: d_H(f_{\bbeta},
f_{\bbeta_0}) \leq r, \;\bbeta \in \cb_M\}$.
From (\ref{eq:h2lq}) and (\ref{eq:l2norm}) it then follows that if $f_{\bbeta} \in \cH_{f_{\bbeta_0},r}$, the corresponding $\bbeta \in \cb_M$ lies in the
Euclidean ball  $\cb_{\bbeta_0,r'}=\{\bbeta \in \mathbb{R}^{|M|}:
||\bbeta-\bbeta_0||_2 \leq r'\}$ with $r'=\frac{\sqrt{2}r}{\delta(1-\delta)
\sqrt{\lambda_{min}(G)}}$. 
 
Furthermore, for any $||\bx||_2 \leq 1$, (\ref{eq:equivnorm}) and Cauchy–-Schwarz inequality imply
that $\left|p_2(\bx)-p_1(\bx)\right| \leq
\frac{1}{4} ||\bbeta_2-\bbeta_1||_2$ and, therefore, by (\ref{eq:helinfty})
\be \label{eq:inftynorm}
\rho(f_{\bbeta_1},f_{\bbeta_2}) \leq \frac{1}{4\delta}~ ||\bbeta_2-\bbeta_1||_2
\ee

Let $N(\cb_{\bbeta_0,r'},l_2, \epsilon)$ be the $\epsilon$-covering number of
$\cb_{\bbeta_0,r'}$ w.r.t. $l_2$-distance.  It is well-known that 
$N(\cb_{\bbeta_0,r'},l_2, \epsilon) \leq \left(1+\frac{2r'}{\epsilon}\right)^{|M|}
\leq \left(\frac{3r'}{\epsilon}\right)^{|M|}$ for any $\epsilon < r'$.

Thus, for the $\epsilon$-covering number
$N(\cH_{f_{\bbeta_0},r},\rho,\epsilon)$ of $\cH_{f_{\bbeta_0},r}$ w.r.t. the distance 
$\rho(f_{\bbeta_1},f_{\bbeta_2})$,
from (\ref{eq:inftynorm}) we have
$$
N(\cH_{f_{\bbeta_0},r},\rho,\epsilon) \leq N(\cb_{\bbeta_0,r'},l_2, 4\delta \epsilon) \leq 
\left(\frac{3 \sqrt{2}}{4\delta^2(1-\delta)\sqrt{\lambda_{\min}(G)}}~\frac{r}{\epsilon}\right)^{|M|}
$$ 
%where we used the boundedness assumption on $\lambda_{min}(W)$ from
%Section \ref{sec:random}. 

The considered family of sparse logistic regression models satisfies then Assumption 1
of Yang and Barron (1998) with $A_M=\frac{c}{\delta^2(1-\delta)\sqrt{\lambda_{\min}(G)}}$ for some $c>0$ and $m_M=|M|$. Apply 
now their Theorem 1 for a penalized maximum likelihood model
selection procedure (\ref{eq:binpen}) with a complexity
penalty $Pen(|M|)= C_1~m_M \ln A_M+
C_2 \cdot C_M \leq \tilde{C}_1 \ln \left(\frac{1}{\delta \lambda_{\min}(G)}\right)~|M|+C_2 |M| \ln\frac{de}{|M|}$, where 
$C_M=|M|\ln\frac{de}{|M|}$, and the exact positive constants $C_1$ and $C_2$ are given in the paper. Thus,
$$
Ed^2_H(f_{\widehat \bbeta_{\widehat M}},f_{\bbeta}) \leq \tilde{C}\ln \left(\frac{1}{\delta \lambda_{\min}(G)}\right)~ \frac{Pen(d_0)}{n}
$$
To complete the proof note that one can always find a constant
$C$ in the penalty (\ref{eq:penalty_random}) such that the resulting
$Pen(|M|)=C \ln \left(\frac{1}{\delta \lambda_{\min}(G)}\right)|M|\ln\frac{de}{|M|} \geq \tilde{C}_1 \left(\frac{1}{\delta \lambda_{\min}(G)}\right)~|M|+C_2 |M| \ln\frac{de}{|M|}$.


\begin{thebibliography}{99}

\bibitem{ag10}
Abramovich, F. and Grinshtein, V. (2010).
MAP model selection in Gaussian regression.
\textit{Electr. J. Statist.} \textbf{4}, 932--949.
%\MR{2721039}

\bibitem{ag16}
Abramovich, F. and Grinshtein, V. (2016).
Model selection and minimax estimation in generalized linear models.
\textit{IEEE Trans. Inf. Theory} \textbf{62}, 3721-3730.

\bibitem{at07}
Audibert, J-Y. and Tsybakov, A. (2007).
Fast learning rates for plug-in classifiers.
{\textit Ann. Statist.}, {\bf 35}, 608--633.

\bibitem{bbm99}
Barron, A., Birg\'e, L. and Massart P. (1999).
Risk bounds for model selection via penalization.
\textit{Prob. Theory Relat. Fields}, {\bf 113}, 301–-413.
\bibitem{bjm06}
Bartlett, P.L., Jordan, M.I. and McAuliffe, J.D. (2006).
Convexity, classification, and risk bounds.
\textit{J. Amer. Statist. Assoc.}, {\bf 101}, 138--156.

\bibitem{blt}
Bellec, P.C., Lecu\'e, G. and Tsybakov, A. (2018).
Slope meets Lasso: improved oracle bounds and optimaility.
\textit{Ann. Statist.}, {\bf 46}, 3603--3642.


\bibitem{bl04}
Bickel, P. and Levina, E. (2004).
Some theory for Fisher's discriminant function, `naive Bayes', and some
alternatives where there are more variables than observations.
\textit{Bernoulli}, {\bf 10}, 989-1010.

\ignore{
\bibitem{bn98}
Birg\'e, L. and Massart, P. (1998).
Minimum contrast estimators on sieves: exponential bounds and rates of convergence.
\textit{Bernoulli}, {\bf 4}, 329--375.
}


\bibitem{bm01}
Birg\'e, L. and Massart, P. (2001).
Gaussian model selection. \textit{J. Eur. Math. Soc.} \textbf{3},
203--268.
%\MR{1848946}

\bibitem{bm07}
Birg\'e, L. and Massart, P. (2007).
Minimal penalties for Gaussian model selection.
\textit{Probab. Theory Relat. Fields} \textbf{138}, 33--73.
%\MR{2288064}

\bibitem{bbssc15}
Bogdan, M., van den Berg, E., Sabatti, C., Su, W. and Cand\'es, E. (2015).
SLOPE -- adaptive variable selection via convex programming.
{\em Ann. Appl. Statist.}, {\bf 9}, 1103--1140.


\bibitem{bbl05}
Boucheron, S., Bousquet, O., and Lugosh, G. (2005).
Theory of classification: a survey of some recent advances.
{\it ESAIM: Prob. Statist.}, {\bf 9}, 323-375.

\bibitem{bg11}
B\"uhlmann, P. and van de Geer, S. (2011).
{\em Statistics for High-Dimensional Data. Methods, Theory and
Applications}. Springer, Berlin.


\bibitem{dgl95}
Devroye, L., Gy\"orfi, L. and Lugosi, G. (1996).
{\em A Probabilistic Theory of Pattern Recognition}. Springer, New York.

\bibitem{ff08} 
Fan, J. and Fan, Y. (2008). 
High-dimensional classification using feature annealed independence rules.
{\it Ann. Statist.}, {\bf 36}, 2605--2637.

\bibitem{g15}
Giraud, C. (2015).
{\em Introduction to High-Dimensional Statistics}. CRC Press, Boca Raton.

\bibitem{kb05}
Koltchinskii, V. and Beznosova, O. (2005). Exponential convergence rates in classification.
In Learning Theory. Lecture Notes in Comput. Sci. {\bf 3559}, 295-–307. Springer, Berlin.

\ignore{
\bibitem{kp16}
Kontorovich, A. and Pinelis, I. (2016).
Exact lower bounds for the agnostic Probably-Approximately-Correct (PAC) machine learning model.
{\em arXiv:1606.08920}.
}

\bibitem{ms99}
Mammen, E. and Tsybakov, A. (1999).
Smooth discrimination analysis.
\textit{Ann. Statist.} \textbf{27}, 1808--1829.

\bibitem{mn05}
Massart, P. and N\'ed\'elec, E. (2006).
Risk bounds for statistical learning.
\textit{Ann. Statist.} \textbf{34}, 2326--2366.

\bibitem{mn89}
McCullagh, P. and Nelder, J. A. (1989).
{\em Generalized Linear Models}, 2nd ed. Chapman and Hall, London.


\bibitem{pw18}
Painsky, A.  and Wornell, G.W. (2018). 
On the universality of the logistic loss function.
\textit{2018 IEEE International Symposium on Information Theory}, 936--940.

\bibitem{rt11}
Rigollet, P. and Tsybakov, A. (2011).
Exponential screening and optimal rates of sparse estimation.
\textit{Ann. Statist.} \textbf{39}, 731--771.
%\MR{2816337}


\bibitem{t04}
Tsybakov, A. (2004).
Optimal aggregation of classifiers in statistical learning.
\textit{Ann. Statist.} \textbf{32}, 135--166. 

\bibitem{vdg08}
van de Geer, S. (2008).
High-dimensional generalized linear models and the Lasso.
\textit{Ann. Statist.} \textbf{36}, 614--645.

\bibitem{v98}
Vapnik, V.N. (2000).
{\em The Nature of Statistical Learning}, 2nd ed. Springer, New York.


\bibitem{v12}
Verzelen, N. (2012).
Minimax risks for sparse regressions: Ultra-high dimensional phenomenon.
\textit{Electr. J. Statist.} \textbf{6}, 38--90.

\bibitem{y99}
Yang, Y. (1999).
Minimax nonparametric classification. Parts I and II. 
%Rates of convergence.
\textit{IEEE Trans. Inf. Theory} \textbf{45}, 2271--2292.

%\bibitem{y99b}
%Yang, Y. (1999b).
%Minimax nonparametric classification - Part II: Model selection %for adaptation.
%\textit{IEEE Trans. Inf. Theory} \textbf{45}, 2285--2292.

\bibitem{yb98}
Yang, Y. and Barron, A.R. (1998)
An asymptotic property of model selection criteria.
\textit{IEEE Trans. Inf. Theory} \textbf{44}, 95--116.

\bibitem{z04a}
Zhang, T. (2004).
Statistical behavior and consistency of classification methods based on convex risk minimization.
\textit{Ann. Statist.} \textbf{32}, 56--85.

\end{thebibliography}
\end{document}